\documentclass[a4paper, 11pt]{amsart}
\usepackage[utf8]{inputenc}
\usepackage[english]{babel}
\usepackage[T1]{fontenc}
\usepackage{amsmath, amssymb, amsfonts}
\usepackage{enumerate}
\usepackage{graphicx}
\usepackage{comment}
\usepackage[hmargin=1.5cm, vmargin=2.5cm]{geometry}
\usepackage{tikz}
\usepackage[all,cmtip]{xy}
\usepackage[colorlinks=true, citecolor=blue]{hyperref}

\title{Free wreath products with amalgamation}
\author{Amaury Freslon}
\email{amaury.freslon@math.u-psud.fr}
\address{Laboratoire de Mathématique d’Orsay, CNRS, Université Paris-Saclay, 91405 Orsay, France.}
\keywords{Compact quantum groups, representation theory, noncrossing partitions}
\subjclass[2010]{20G42, 05E10}
\date{}

\theoremstyle{plain}

\newtheorem{thm}{Theorem}[section]
\newtheorem{lem}[thm]{Lemma}
\newtheorem{prop}[thm]{Proposition}
\newtheorem{cor}[thm]{Corollary}

\theoremstyle{definition}
\newtheorem{de}[thm]{Definition}

\newtheorem{rem}[thm]{Remark}

\DeclareMathOperator{\cd}{cd}

\DeclareMathOperator{\id}{id}
\DeclareMathOperator{\Irr}{Irr}
\DeclareMathOperator{\Mor}{Mor}

\DeclareMathOperator{\Span}{span}

\newcommand{\A}{\mathcal{A}}

\newcommand{\C}{\mathbb{C}}
\newcommand{\CC}{\mathcal{C}}
\newcommand{\D}{\Delta}

\newcommand{\G}{\mathbb{G}}
\newcommand{\HH}{\mathbb{H}}
\newcommand{\K}{\mathbb{K}}

\newcommand{\N}{\mathbf{N}}

\newcommand{\Z}{\mathbf{Z}}

\renewcommand{\O}{\mathcal{O}}
\renewcommand{\SS}{\mathcal{S}}

\begin{document}

\begin{abstract}
We define and study a notion of free wreath product with amalgamation for compact quantum groups. These objects were already introduced in the case of duals of discrete groups under the name ``free wreath products of pairs'' in a previous work of ours. We give several equivalent descriptions and use them to establish properties like residual finiteness, the Haagerup property or a smash product decomposition.
\end{abstract}

\maketitle

\section{Introduction}

Compact quantum groups were introduced by S.L. Woronowicz in \cite{woronowicz1987compact} and \cite{woronowicz1995compact} as a generalisation of compact groups, and in particular of compact Lie groups, in the spirit of non-commutative geometry. The study of these objects has continued ever since and revealed unsuspected connections to other fields like combinatorics \cite{banica2009liberation}, free probability \cite{kostler2009noncommutative} or graph theory \cite{mancinska2019quantum}. One essential aspect of these advances is that they were all related to the search for new examples of compact quantum groups.

Another direction of research on this topic is a vast classification program for a subclass of compact quantum groups that can be described combinatorially, the \emph{easy} \cite{banica2009liberation} (or more generally \emph{partition} \cite{freslon2014partition}) quantum groups. The study of various types of invariants and constructions for the purpose of their classification led to the discovery of new families of examples of compact quantum groups and to new insight into previously known ones. The present work is an instance of this phenomenon. We will be interested in compact quantum groups which were discovered in the course of the classification of non-crossing partition quantum groups on two self-adjoint colours in \cite{freslon2017noncrossing}, and which turned out to be generalizations of objects introduced by J. Bichon in \cite{bichon2004free} in relation to quantum automorphism groups of graphs.

The \emph{free wreath product with amalgamation} is a generalization of the free wreath product where instead of considering a single compact quantum group $\G$ and an integer $N$, we consider a dual inclusion of quantum groups (see Section \ref{sec:general} for details). The definition came out of a combinatorial description of the representation theory of these objects when $\G$ is the dual of a discrete group, but free wreath products are know to enjoy several equivalent descriptions, each of them suited to the study of some specific properties. In the present work, we will give similar descriptions and deduce from them several structure results on free wreath products of pairs without always resorting to the partition picture.

Let us now describe more precisely the contents of the paper. After some preliminaries, we will give in Section \ref{sec:general} an abstract definition of free wreath products with amalgamation (Definition \ref{de:freewreathproductwithamalgamation}) together with elementary examples. We will then focus on the case of discrete groups to give another characterization in terms of universal $*$-algebras (Proposition \ref{prop:universaldiscrete}) and use it to deduce several results concerning monoidal equivalence and approximation properties. We will conclude that first part with a specific result concerning discrete abelian groups, which in turn sheds light on the classical counterpart of free wreath products with amalgamation, which we will simply call wreath product with amalgamation.

In Section \ref{sec:partitions}, we make the connection with the original definition involving partitions. This enables us to prove a topological generation result in Theorem \ref{thm:topological_generation}, which can be applied to the question of residual finiteness. We also provide a generalization of a result of F. Lemeux and P. Tarrago \cite{lemeux2014free} in Proposition \ref{prop:monoidal_equivalence} concerning monoidal equivalence, from which results on approximation properties can be deduced.

Eventually, in Section \ref{sec:extension} we show that, in the discrete case, when $\Lambda$ is a normal subgroup of $\Gamma$, the free wreath product of the pair fits into an exact sequence of Hopf $*$-algebra. We then give applications to cohomological dimension and determine when the exact sequence splits to yield a smash product decomposition.

\subsection*{Acknowledgement}

The author whishes to thank Julien Bichon for reading and commenting a preliminary version of this text, as well as for bibliographical assistance. He was supported by the ANR grants ``Noncommutative analysis on groups and quantum groups'' (ANR-19-CE40-0002) and ``Operator algebras and dynamics on groups'' (ANR-19-CE40-0008).

\section{Preliminaries}

In this preliminary section we will recall some basic facts about compact quantum groups and their combinatorics, mainly to fix terminology and notations. We refer the reader to \cite{timmermann2008invitation} and \cite{neshveyev2014compact} for detailed treatments of the theory. It is known since the work of M. Dijkhuizen and T. Koornwinder \cite{dijkhuizen1994CQG} that compact quantum groups can be treated algebraically through the following notion of a CQG-algebra.

\begin{de}
A \emph{CQG-algebra} is a Hopf $*$-algebra which is spanned by the coefficients of its finite-dimensional unitary corepresentations.
\end{de}

If $G$ is a compact group, then its algebra of regular functions $\O(G)$ is a CQG-algebra. Based on that example, and in an attempt to retain the intuition coming from the classical setting, we will denote a general CQG-algebra by $\O(\G)$ and say that it corresponds to the \emph{compact quantum group} $\G$. If $\Gamma$ is a discrete group and $\C[\Gamma]$ denotes its group algebra, it is easy to endow it with a Hopf $*$-algebra structure with coproduct given by $\D(g) = g\otimes g$ for all $g\in \Gamma$. Since this turns each $g\in \Gamma\subset\C[\Gamma]$ into a one-dimensional co-representation, it yields a CQG-algebra. The resulting compact quantum group is called the \emph{dual} of $\Gamma$ and is denoted by $\widehat{\Gamma}$.

The main feature of compact quantum groups is their nice representation theory, which is just another point of view on the corepresentation theory of the corresponding CQG-algebra. Let us give a definition to make this clear.

\begin{de}
An \emph{$n$-dimensional representation} of a compact quantum group $\G$ is an element $v\in M_{n}(\O(\G))$ which is invertible and such that for all $1\leqslant i, j\leqslant n$,
\begin{equation*}
\D(v_{ij}) = \sum_{k=1}^{n}v_{ik}\otimes v_{kj}.
\end{equation*}
It is said to be \emph{unitary} if it is unitary as an element of $M_{n}(\O(\G))$.
\end{de}

Given two representations $v$ and $w$, one can form their direct sum by considering a block diagonal matrix with blocks $v$ and $w$ respectively, and their tensor product by considering the matrix with coefficients
\begin{equation*}
(v\otimes w)_{(i,k),(j,\ell)} = v_{ij}w_{k\ell}.
\end{equation*}
In this setting, an intertwiner between two representations $v$ and $w$ of dimension respectively $n$ and $m$ will be a linear map $T : \C^{n}\to \C^{m}$ such that $Tv = wT$ (we are here identifying $M_{n}(\C)$ with $M_{n}(\C.1_{\O(\G)})\subset M_{n}(\O(\G))$). The set of all intertwiners between $v$ and $w$ will be denoted by $\displaystyle\Mor_{\G}(v, w)$.

If $T$ is injective, then $v$ is said to be a \emph{subrepresentation} of $w$, and if $w$ admits no subrepresentation apart from itself, then it is said to be \emph{irreducible}. One of the fundamental results in the representation theory of compact quantum groups is due to S.L. Woronowicz in \cite{woronowicz1995compact} and can be summarized as follows :

\begin{thm}[Woronowicz]
Any finite-dimensional representation of a compact quantum group splits as a direct sum of irreducible ones, and any irreducible representation is equivalent to a unitary one.
\end{thm}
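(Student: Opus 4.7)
The plan is to prove the two assertions in sequence: first that every finite-dimensional representation is equivalent to a unitary one, and then that every unitary representation splits as a direct sum of irreducibles. Both steps rest on the existence of a Haar state $h : \O(\G) \to \C$, i.e.\ a state satisfying the bi-invariance $(h \otimes \id) \circ \D = (\id \otimes h) \circ \D = h(\cdot) 1_{\O(\G)}$. The existence and faithfulness of $h$ on an arbitrary CQG-algebra is a classical result of Woronowicz that I take as background.

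For the unitarizability half, I start from an invertible $v \in M_n(\O(\G))$ satisfying the corepresentation identity and define the positive matrix $Q = (\id \otimes h)(v^* v) \in M_n(\C)$ by applying $h$ entrywise. Combining the coproduct formula $\D(v_{ij}) = \sum_k v_{ik} \otimes v_{kj}$ with the left-invariance of $h$, a direct computation should yield the key identity
\begin{equation*}
v^* (Q \otimes 1) v = Q \otimes 1.
\end{equation*}
Faithfulness of $h$ together with invertibility of $v$ then forces $Q$ to be invertible, since $\langle Q\xi, \xi \rangle = h\bigl(\sum_m (v\xi)_m^* (v\xi)_m\bigr)$ vanishes only when $v\xi = 0$, hence only when $\xi = 0$. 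Setting $w := (Q^{1/2} \otimes 1) v (Q^{-1/2} \otimes 1)$ then produces a unitary representation equivalent to $v$, via the intertwiner $Q^{1/2}$.

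For the direct sum decomposition, I may therefore assume $v$ is unitary. Under this assumption, $\Mor_\G(v, v) \subset M_n(\C)$ is a $*$-subalgebra: if $(T \otimes 1) v = v (T \otimes 1)$, then taking the adjoint and using $v^* v = v v^* = 1$ shows $(T^* \otimes 1) v = v (T^* \otimes 1)$, so $T^* \in \Mor_\G(v, v)$ as well. By the classical Artin--Wedderburn theorem, this finite-dimensional $*$-subalgebra of $M_n(\C)$ decomposes as a direct sum of full matrix algebras, and in particular its unit splits as a sum of pairwise orthogonal minimal projections $P_1 + \cdots + P_k = \id$. Each $P_i \C^n$ is then an invariant subspace carrying a subrepresentation of $v$, and minimality of $P_i$ in $\Mor_\G(v, v)$ forces this subrepresentation to be irreducible (any proper invariant subspace would give a nonzero projection strictly below $P_i$), yielding the desired decomposition.

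The main technical point is the verification of the invariance identity for $Q$, which is a routine but index-heavy computation combining the coproduct identity on matrix coefficients with the left-invariance of $h$. The more substantial conceptual ingredient is the existence of the Haar state itself, which is foundational and well-documented in all the references cited. With these two pieces in hand, the rest is elementary finite-dimensional algebra.
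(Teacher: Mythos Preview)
Your argument is correct and is essentially the classical proof due to Woronowicz. Note, however, that the paper does not actually prove this theorem: it is stated in the preliminaries as a foundational result with a reference to \cite{woronowicz1995compact}, so there is no ``paper's own proof'' to compare against. Your write-up supplies precisely the standard argument one finds in that reference or in the textbooks cited (e.g.\ \cite{neshveyev2014compact}): construct the positive invertible $Q=(\id\otimes h)(v^{*}v)$ via the Haar state, verify the invariance relation $v^{*}(Q\otimes 1)v=Q\otimes 1$ from left-invariance of $h$, conjugate by $Q^{1/2}$ to unitarize, and then exploit that $\Mor_{\G}(v,v)$ is a finite-dimensional $*$-subalgebra of $M_{n}(\C)$ to obtain a decomposition into minimal projections.

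Two small remarks. First, you prove slightly more than the statement asks: you show that \emph{every} finite-dimensional representation (not only the irreducible ones) is equivalent to a unitary one. This is harmless and in fact needed for your reduction of the first assertion to the unitary case. Second, your appeal to faithfulness of the Haar state is legitimate at the algebraic level of CQG-algebras (where it always holds), but be aware that in Woronowicz's original C*-algebraic treatment the Haar state is constructed \emph{before} any representation theory, so there is no circularity; in some purely algebraic presentations the logic runs differently, and one first shows complete reducibility of \emph{unitary} corepresentations directly (orthogonal complements of invariant subspaces are invariant), deduces cosemisimplicity, and only then builds $h$. Either route is fine, but it is worth knowing where the Haar state sits in the logical order.
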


We will need to consider subgroups in this quantum setting, and this can be done in two ways. First, if $G$ is a compact group and $H$ is a closed subgroup, then the restriction of functions yields a Hopf algebra $*$-homomorphism $\pi : \O(G)\to \O(H)$, leading to the following definition :

\begin{de}
Let $\G$ and $\HH$ be compact quantum groups. We say that $\HH$ is a \emph{quantum subgroup} of $\G$ if there exists a surjective Hopf algebra $*$-homomorphism $\pi : \O(\G)\to \O(\HH)$. We then write $\HH < \G$.
\end{de}

Consider now the dual of a discrete group $\Gamma$ and let $A\subset \O(\widehat{\Gamma}) = \C[\Gamma]$ be a Hopf $*$-subalgebra. Then, there exists a subgroup $\Lambda$ of $\Gamma$ such that $A = \C[\Lambda]$. Abstracting this yields

\begin{de}
Let $\G$ and $\HH$ be compact quantum groups. If $\O(\HH)$ is a Hopf $*$-subalgebra of $\O(\G)$, then $\HH$ is said to be a \emph{dual quantum subgroup} of $\G$.
\end{de}

\begin{rem}
With a theory of discrete quantum groups at hand together with the corresponding generalization of Pontryagin duality (which we will not introduce because we do not need it later on), the previous definition becomes very natural since it corresponds to the discrete dual $\widehat{\HH}$ of $\HH$ being a quantum subgroup of the discrete dual $\widehat{\G}$ of $\G$. One could therefore write this as $\widehat{\HH} < \widehat{\G}$ but we will avoid that notation in the sequel.
\end{rem}

Part of the present work relies on the combinatorial approach to compact quantum groups initally developped by T. Banica and R. Speicher in \cite{banica2009liberation}. We will use here the coloured version of that theory introduced in \cite{freslon2014partition}, which we very briefly recall. The basic objects are partitions of finite sets, which we represent by drawing the elements of the set on two parallel rows and connecting them by lines if they belong to the same component of the partition. Such a graphical interpretation enables to concatenate partitions horizontally and vertically (if the number of points matches) and rotate points from one line to another.

A \emph{category of partitions} is a collection of partitions $\CC(k, \ell)$ on $k+\ell$ points for all integers $k, \ell$ which is globally stable under the above operations and contains the identity partition $\vert$. It is moreover said to be \emph{coloured} if to each point is attached an element of a colour set $\A$, which is simply a set endowed with an involution $x\mapsto x^{-1}$. When a point of a partition is rotated from one row to another, its colour is changed into its image under the involution. Moreover, vertical concatenation of coloured partitions is only allowed when the colours of the points which are merged match. We also have a coloured version of the identity partition for each $x\in \A$, called the $x$-identity partition. To each coloured partition $p$ we associate the word $w$ formed by the colours of the upper row (from left to right) and the word $w'$ formed by the colours of its lower row (again from left to right). We then write $p\in \CC(w, w')$.

The framework of easy quantum groups generalizes to this setting. In particular, to any category of coloured partitions $\CC$ and any integer $N$ is associated a compact quantum group $\G_{N}(\CC)$. Its CQG-algebra is generated by the coefficients of unitary representations $(u^{x})_{x\in \A}$ indexed by the colours and any irreducible representation is a subrepresentation of a tensor product
\begin{equation*}
u^{\otimes w} = u^{w_{1}}\otimes\cdots\otimes u^{w_{n}},
\end{equation*}
where $w = w_{1}\cdots w_{n}$ is a word over $\A$. Moreover, the intertwiner spaces are completely determined by the category of partitions, in the sense that
\begin{equation*}
\Mor_{\G_{N}(\CC)}\left(u^{\otimes w}, u^{\otimes w'}\right) = \Span\left\{T_{p} \mid p\in \CC(w, w')\right\}.
\end{equation*}
Here, $T_{p}$ is a linear map the definition of which we now recall. Writing indices $i_{1}, \cdots, i_{n}$ from left to right on the upper row of $p$ and indices $j_{1}, \cdots, j_{k}$ from left to right on the lower row, we set $\delta_{p}(i_{1} \cdots i_{n}, j_{1}\cdots j_{k}) = 1$ if all indices which are connected by strings of $p$ are equal, and $\delta_{p}(i_{1} \cdots i_{n}, j_{1}\cdots j_{k}) = 0$ otherwise. Then, if $(e_{i})_{1\leqslant i\leqslant N}$ is the canonical basis of $\C^{N}$, we set
\begin{equation*}
T_{p}(e_{i_{1}}\otimes\cdots\otimes e_{n}) = \sum_{j_{1}, \cdots, j_{k}=1}^{N}\delta_{p}(i_{1} \cdots i_{n}, j_{1}\cdots j_{k}) e_{j_{1}}\otimes\cdots\otimes e_{j_{k}}.
\end{equation*}

\section{General theory}\label{sec:general}

\subsection{Definition and first examples}

Free wreath products of pairs were introduced in \cite{freslon2017noncrossing} through the use of categories of coloured partitions. The definition can be seen as a generalization of the categories of partitions of free wreath products computed in \cite{lemeux2013haagerup}, hence the name. However, free wreath products were defined in purely Hopf algebraic terms by J. Bichon in \cite{bichon2004free}. We will therefore start by providing a definition in the same spirit. At this level, working with arbitrary compact quantum groups does not entail any additional difficulty, hence we will use that setting for the moment.

Free wreath products can be defined as quotients of a free product involving a compact quantum group $\G$ and the quantum permutation group $S_{N}^{+}$ introduced by S. Wang in \cite{wang1998quantum}, whose definition we now recall.

\begin{de}
The CQG-algebra of the quantum permutation group $S_{N}^{+}$ is the universal unital $*$-algebra generated by elements $(p_{ij})_{1\leqslant i, j\leqslant N}$ such that
\begin{itemize}
\item $p_{ij}^{2} = p_{ij} = p_{ij}^{*}$ for all $1\leqslant i, j\leqslant N$ ;
\item $\displaystyle\sum_{k=1}^{N}p_{ik} = 1 = \displaystyle\sum_{k=1}^{N}p_{kj}$ for all $1\leqslant i, j\leqslant N$ ;
\item $p_{ik}p_{jk} = \delta_{ij}p_{ik}$ and $p_{ki}p_{kj} = \delta_{ij}p_{kj}$ for all $1\leqslant i, j\leqslant N$ ;
\end{itemize}
endowed with the unique coproduct $\D : \O(S_{N}^{+})\to \O(S_{N}^{+})\otimes \O(S_{N}^{+})$ such that for all $1\leqslant i, j\leqslant N$,
\begin{equation*}
\D(p_{ij}) = \sum_{k=1}^{N}p_{ik}\otimes p_{kj}
\end{equation*}
and structure maps given by $\varepsilon(p_{ij}) = \delta_{ij}$ and $S(p_{ij}) = p_{ji}$.
\end{de}

We will follow the same path, and the novelty of the free wreath product with amalgamation construction can indeed be seen at this level simply as an additional amalgamation introduced in the free product. More precisely, we will consider the following $*$-algebra : 

\begin{de}\label{de:commutationrelations}
Let $\G$ be a compact quantum group and let $\HH$ be a dual quantum subgroup of $\G$. Let
\begin{equation*}
A = \O(\G)^{\ast_{\O(\HH)}^{N}}
\end{equation*}
be the iterated amalgamated free product and denote by $\nu_{i} : \O(\G)\to A$ the canonical inclusion into the $i$-th factor. Then, the $*$-algebra $\O(\G\wr_{\ast, \HH}S_{N}^{+})$ is defined as the quotient of $A$ by the relations
\begin{equation*}
[\nu_{i}(x), p_{ij}] = 0
\end{equation*}
for all $1\leqslant i, j\leqslant N$ and all $x\in \O(\G)$.
\end{de}

Note that the CQG-algebra of the usual free wreath product $\G\wr_{\ast}S_{N}^{+}$ can be recovered as the special case where $\HH = \{e\}$ is the trivial group (i.e. the inclusion $\C.1_{\O(\G)}\subset \O(\G)$). Moreover, we can in general see $\O(\G\wr_{\ast, \HH}S_{N}^{+})$ as the quotient of $\O(\G\wr_{\ast}S_{N}^{+})$ by the ideal generated by $\nu_{i}(x) - \nu_{j}(x)$ for all $1\leqslant i\neq j\leqslant N$ and $x\in \O(\HH)$.

The main technical point in the definition of free wreath products in \cite{bichon2004free} is the coproduct, whose formula is not completely obvious. It is proven in \cite[Thm 2.3]{bichon2004free} that, denoting by $\D_{\G}$ the coproduct on $\G$, there is a unique coproduct $\D$ on $\O(\G\wr_{\ast}S_{N}^{+})$ such that
\begin{equation*}
\D(p_{ij}) = \sum_{k=1}^{N}p_{ik}\otimes p_{kj} \quad \& \quad \D(\nu_{i}(x)) = \sum_{k=1}^{N}(\nu_{i}\otimes \nu_{k})(\D_{\G}(x))(p_{ik}\otimes 1)
\end{equation*}
for all $1\leqslant i, j\leqslant N$ and all $x\in \O(\G)$. The most natural thing to do is therefore to check that these formul\ae{} again define a coproduct on $\O(\G\wr_{\ast, \HH}S_{N}^{+})$.

\begin{prop}
Let $\G$ be a compact quantum group and let $\HH$ be a dual quantum subgroup of $\G$. Then, the previous coproduct on $\O(\G\wr_{\ast}S_{N}^{+})$ factors through the quotient onto $\O(\G\wr_{\ast, \HH}S_{N}^{+})$, endowing the latter with a compact quantum group structure.
\end{prop}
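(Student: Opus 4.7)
The plan is to show that the ideal
\[
I = \langle \nu_i(x) - \nu_j(x) : x \in \O(\HH),\ i \neq j \rangle
\]
carving $\O(\G\wr_{\ast,\HH}S_{N}^{+})$ out of $A := \O(\G\wr_{\ast}S_{N}^{+})$ is a $*$-stable Hopf ideal, so that the whole Hopf $*$-algebra structure descends to the quotient. The $*$-stability is immediate because $\O(\HH)$ is a $*$-subalgebra of $\O(\G)$, and the vanishing of $\varepsilon$ on $I$ is equally immediate since $\varepsilon(\nu_i(x)) = \varepsilon_\G(x)$ does not depend on $i$. The genuine content is the inclusion $\D(I) \subset I \otimes A + A \otimes I$, equivalently $(q \otimes q) \D (\nu_i(x) - \nu_j(x)) = 0$ for every $x \in \O(\HH)$ and every $i, j$, where $q : A \to A/I$ is the quotient map.

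For this central computation I would use that $\HH$ being a dual quantum subgroup means $\O(\HH) \subset \O(\G)$ is a Hopf $*$-subalgebra, so $\D_{\G}(x) \in \O(\HH) \otimes \O(\HH)$ whenever $x \in \O(\HH)$. Writing $\D_{\G}(x) = \sum_l y_l \otimes z_l$ with $y_l, z_l \in \O(\HH)$ and plugging into Bichon's coproduct formula,
\[
(q \otimes q)\D(\nu_i(x)) = \sum_{k, l} q(\nu_i(y_l))\, q(p_{ik}) \otimes q(\nu_k(z_l)).
\]
By the very definition of $I$, the elements $q(\nu_m(y_l))$ and $q(\nu_m(z_l))$ are independent of $m$; denote them $\iota(y_l)$ and $\iota(z_l)$. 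Pulling these out of the sum over $k$ and applying the defining relation $\sum_k q(p_{ik}) = 1$ of $\O(S_{N}^{+})$, the right-hand side collapses to $\sum_l \iota(y_l) \otimes \iota(z_l)$, which is manifestly independent of $i$. Hence $(q \otimes q)\D(\nu_i(x) - \nu_j(x)) = 0$, as required.

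Once $I$ is known to be a bi-ideal, $A/I$ is a $*$-bialgebra. Because $A$ is a CQG-algebra by Bichon's original result, it is spanned by matrix coefficients of finite-dimensional unitary corepresentations, whose images under $q$ are again matrix coefficients of unitary corepresentations and together span $A/I$. A standard Woronowicz-type result then automatically upgrades this $*$-bialgebra to a Hopf $*$-algebra with a $*$-antipode, so that $A/I$ is a CQG-algebra. Alternatively, one can verify directly that the formula $S(\nu_i(x)) = \sum_k p_{ki}\, \nu_k(S_\G(x))$ preserves $I$, using $S_\G(\O(\HH)) \subset \O(\HH)$ and the column-sum relation $\sum_k p_{ki} = 1$ in place of the row-sum relation.

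The only real obstacle lies in the central computation, where it is essential to notice that passing to the quotient simultaneously trivialises the $i$-dependence of $q(\nu_i(y_l))$ and the $k$-dependence of $q(\nu_k(z_l))$, so that the row-sum relation of the quantum permutation matrix suffices to eliminate $i$ altogether; no commutation relation inherited from $A$ is actually needed beyond what is already built into the coproduct formula.
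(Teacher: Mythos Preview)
Your proposal is correct and follows essentially the same approach as the paper: both arguments hinge on the fact that $\D_{\G}(x)\in\O(\HH)\otimes\O(\HH)$ for $x\in\O(\HH)$, together with the row-sum relation $\sum_{k}p_{ik}=1$, to show that the ideal generated by $\nu_{i}(x)-\nu_{j}(x)$ is a Hopf ideal. Your formulation---computing $(q\otimes q)\D(\nu_{i}(x))$ directly in the quotient and observing that it is $i$-independent---is in fact a cleaner packaging of the same computation the paper carries out explicitly in $A$ by introducing an auxiliary summation index and adding and subtracting terms.
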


\begin{proof}
All that we have to do is to check that the $*$-ideal generated by the elements $\nu_{i}(x) - \nu_{j}(x)$ for all $x\in\O(\HH)$ is a Hopf ideal. Indeed, using Sweedler's notation to write $\D_{\G}(x) = x_{1}\otimes x_{2}\in \O(\HH)\otimes \O(\HH)$,
\begin{align*}
\D(\nu_{i}(x) - \nu_{j}(x)) & = \sum_{k=1}^{N}(\nu_{i}\otimes \nu_{k})(x_{1}\otimes x_{2})(p_{ik}\otimes 1) - (\nu_{j}\otimes \nu_{k})(x_{1}\otimes x_{2})(p_{jk}\otimes 1) \\
& = \sum_{k, k'=1}^{N}\nu_{i}(x_{1})p_{ik}\otimes (\nu_{k} - \nu_{k'})(x_{2}) + \sum_{k, k'=1}^{N}\nu_{i}(x_{1})p_{ik}\otimes \nu_{k'}(x_{2}) \\
& - \sum_{k, k'=1}^{N}\nu_{j}(x_{1})p_{jk}\otimes (\nu_{k} - \nu_{k'})(x_{2}) - \sum_{k, k'=1}^{N}\nu_{j}(x_{1})p_{jk}\otimes \nu_{k'}(x_{2}) \\
& = \sum_{k, k'=1}^{N}\nu_{i}(x_{1})p_{ik}\otimes (\nu_{k} - \nu_{k'})(x_{2}) + \sum_{k'=1}^{N}\nu_{i}(x_{1})\otimes \nu_{k'}(x_{2}) \\
& - \sum_{k, k'=1}^{N}\nu_{j}(x_{1})p_{jk}\otimes (\nu_{k} - \nu_{k'})(x_{2}) - \sum_{k'=1}^{N}\nu_{j}(x_{1})\otimes \nu_{k'}(x_{2}) \\
& = \sum_{k, k'=1}^{N}\nu_{i}(x_{1})p_{ik}\otimes (\nu_{k} - \nu_{k'})(x_{2}) - \sum_{k, k'=1}^{N}\nu_{j}(x_{1})p_{jk}\otimes (\nu_{k} - \nu_{k'})(a_{2}) \\
& + \sum_{k'=1}^{N}(\nu_{i}(x_{1}) - \nu_{j}(x_{1})\otimes \nu_{k'}(x_{2}) \\
\end{align*}
proving our claim.

The fact that this yields a compact quantum group is then proven exactly as in the proof of \cite[Thm 2.3]{bichon2004free}.
\end{proof}

We are now ready for the definition of the main object of this work.

\begin{de}\label{de:freewreathproductwithamalgamation}
The compact quantum group associated to the pair $(\O(\G\wr_{\ast, \HH}S_{N}^{+}), \D)$ is called the \emph{free wreath product of $\G$ by $S_{N}^{+}$ with amalgamation over $\HH$} and is denoted by $\G\wr_{\ast, \HH}S_{N}^{+}$.
\end{de}

In order to illustrate the construction, let us show that it interpolates in a sense between the direct product and the free wreath product.

\begin{prop}\label{prop:directproduct}
Let $\G$ be a compact quantum group and let $\HH$ be a dual quantum subgroup. Then, for all $N\in\N$ there are inclusions
\begin{equation*}
\G\times S_{N}^{+} < \G\wr_{\ast, \HH}S_{N}^{+} < \G\wr_{\ast}S_{N}^{+}.
\end{equation*}
\end{prop}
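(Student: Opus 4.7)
The proof has two halves; I will treat them separately.

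For the second inclusion $\G\wr_{\ast, \HH}S_{N}^{+} < \G\wr_{\ast}S_{N}^{+}$, the work is essentially done. By the remark following Definition \ref{de:commutationrelations}, $\O(\G\wr_{\ast, \HH}S_{N}^{+})$ is the quotient of $\O(\G\wr_{\ast}S_{N}^{+})$ by the $*$-ideal $I$ generated by the elements $\nu_{i}(x) - \nu_{j}(x)$ for $x\in\O(\HH)$. The computation in the proof of the previous proposition shows that $I$ is a Hopf ideal (the same Sweedler-notation argument applies with the coproduct of $\O(\G\wr_{\ast}S_{N}^{+})$ in place of $\O(\G\wr_{\ast, \HH}S_{N}^{+})$). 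Hence the quotient map is a surjective Hopf $*$-algebra homomorphism, which is exactly the content of $\HH < \G\wr_{\ast}S_{N}^{+}$ being a dual/quantum subgroup relation in the required form.

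For the first inclusion $\G\times S_{N}^{+} < \G\wr_{\ast, \HH}S_{N}^{+}$, I plan to build a surjective Hopf $*$-algebra homomorphism
\begin{equation*}
\pi : \O(\G\wr_{\ast, \HH}S_{N}^{+}) \longrightarrow \O(\G)\otimes \O(S_{N}^{+})
\end{equation*}
by setting $\pi(\nu_{i}(x)) = x\otimes 1$ for all $1\leqslant i\leqslant N$ and $x\in\O(\G)$, and $\pi(p_{ij}) = 1\otimes p_{ij}$. Well-definedness requires checking three kinds of relations: the amalgamated free product relations (the $N$ maps $x\mapsto x\otimes 1$ all agree on $\O(\HH)$, indeed on all of $\O(\G)$, so the universal property of $\O(\G)^{\ast_{\O(\HH)}^{N}}$ produces a $*$-homomorphism from $A$), the $S_{N}^{+}$-relations (clear from the second tensor factor), and the commutation relations $[\nu_{i}(x), p_{ij}] = 0$ (trivial since the images lie in different tensor factors). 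Surjectivity is immediate.

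It remains to check that $\pi$ is a Hopf algebra map, i.e.\ that $(\pi\otimes\pi)\circ\D = \D_{\G\times S_{N}^{+}}\circ\pi$ on generators. For $p_{ij}$ this is routine. For $\nu_{i}(x)$, writing $\D_{\G}(x) = x_{1}\otimes x_{2}$ in Sweedler notation, one has
\begin{equation*}
(\pi\otimes\pi)\D(\nu_{i}(x)) = \sum_{k=1}^{N}(x_{1}\otimes p_{ik})\otimes(x_{2}\otimes 1) = x_{1}\otimes\Bigl(\sum_{k=1}^{N}p_{ik}\Bigr)\otimes x_{2}\otimes 1 = x_{1}\otimes 1\otimes x_{2}\otimes 1,
\end{equation*}
which is precisely $\D_{\G\times S_{N}^{+}}(x\otimes 1)$ after applying the usual flip identification on $\O(\G)\otimes\O(S_{N}^{+})\otimes\O(\G)\otimes\O(S_{N}^{+})$. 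Compatibility with the counit and antipode is then automatic in the CQG-algebra setting (or can be checked just as directly using $\varepsilon(p_{ij})=\delta_{ij}$, $S(p_{ij})=p_{ji}$).

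The only mildly delicate point is bookkeeping with Sweedler notation when verifying the coproduct identity for $\nu_{i}(x)$; the collapse $\sum_{k}p_{ik}=1$ in the middle tensor slot is what makes the calculation work, and this is what reflects the intuition that the diagonal copy of $\O(\G)$ (rather than one fixed factor) is what commutes with all of $\O(S_{N}^{+})$ inside the free wreath product. No other step presents a genuine obstacle.
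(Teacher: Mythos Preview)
Your proof is correct. Both inclusions are established by the same underlying mechanism as in the paper: quotienting by the Hopf ideal generated by $\nu_{i}(x)-\nu_{j}(x)$ for $x$ ranging over a larger Hopf $*$-subalgebra.

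The organization differs slightly. The paper first proves the general monotonicity statement that whenever $\K$ is a dual quantum subgroup of $\HH$ one has $\G\wr_{\ast,\HH}S_{N}^{+} < \G\wr_{\ast,\K}S_{N}^{+}$, and then specialises to the two extreme cases $\K=\{e\}$ and $\K=\G$; the first inclusion then reduces to checking the identification $\G\wr_{\ast,\G}S_{N}^{+}=\G\times S_{N}^{+}$, which is immediate since $\O(\G)^{\ast_{\O(\G)}^{N}}=\O(\G)$. Your direct construction of $\pi$ is exactly this identification composed with the monotonicity map, so the content is the same; you simply bypass the intermediate object and verify the coproduct compatibility by hand (your use of $\sum_{k}p_{ik}=1$ is precisely what makes the coproduct formula collapse to that of $\G\times S_{N}^{+}$). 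The paper's route has the minor advantage of recording the full monotonicity in $\HH$ along the way, while yours has the advantage of making the Hopf map explicit on generators.
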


\begin{proof}
Note first that if $\K$ is a dual quantum subgroup of $\HH$, then there is a canonical surjection
\begin{equation*}
\O(\G)^{\ast_{\O(\K)}^{N}}\to \O(\G)^{\ast_{\O(\HH)}^{N}}
\end{equation*}
since $\{\nu_{i}(x) - \nu_{j}(x) \mid x\in \O(\K)\}\subset \{\nu_{i}(x) - \nu_{j}(x) \mid x\in \O(\HH)\}$. Further quotienting by the commutation relations of Definition \ref{de:commutationrelations}, yields a surjective $*$-homomorphisms 
\begin{equation*}
\O(\G\wr_{\ast, \K}S_{N}^{+}) \to \O(\G\wr_{\ast, \HH}S_{N}^{+})
\end{equation*}
and because the definition of the coproduct is given by the same formul\ae{} of both sides, this yields an inclusion of compact quantum groups $\G\wr_{\ast, \HH}S_{N}^{+} < \G\wr_{\ast, \K}S_{N}^{+}$. Applying this to $\K=\G$ and $\K=\{e\}$ yields
\begin{equation*}
\G\wr_{\ast, \G}S_{N}^{+} < \G\wr_{\ast, \HH}S_{N}^{+} < \G\wr_{\ast, \{e\}}S_{N}^{+} = \G\wr_{\ast}S_{N}^{+}.
\end{equation*}
To conclude, is suffices to check that $\G\wr_{\ast, \G}S_{N}^{+} = \G\times S_{N}^{+}$. Indeed, $\O(\G)^{\ast_{\O(\G)}^{N}} = \O(\G)$ and the relations make that unique copy of $\O(\G)$ commute with all the generators of $\O(S_{N}^{+})$.
\end{proof}

\subsection{The group dual case}

We will now restrict to the case where $\G$ is the dual of a discrete group $\Gamma$. Then, $\HH$ is nothing but the dual of a subgroup $\Lambda$ of $\Gamma$. In that case, there is an alternative description of the free wreath product using explicit generators. Let us first recall how this works for free wreath products. Consider the universal $*$-algebra $\O(H_{N}^{+}(\Gamma))$ generated by elements $a_{ij}(g)$ for all $1\leqslant i, j\leqslant N$ and $g\in \Gamma$ subject to the relations.
\begin{align*}
a_{ij}(g)a_{ik}(h) & = \delta_{ik}a_{ij}(gh) \\
a_{ji}(g)a_{ki}(h) & = \delta_{jk}a_{ji}(gh) \\
\sum_{k=1}^{N}a_{ik}(e) & = 1 = \sum_{k=1}^{N}a_{kj}(e)
\end{align*}
Using the universal property, it is easy to see that there exists a coproduct $\D$ satisfying
\begin{equation*}
\D(a_{ij}(g)) = \sum_{k=1}^{N}a_{ik}(g)\otimes a_{kj}(g)
\end{equation*}
and yielding a compact quantum group structure on $\O(H_{N}^{+}(\Gamma))$. Moreover, it is proven in \cite[Ex 2.5]{bichon2004free} that the corresponding quantum group $H_{N}^{+}(\Gamma)$ is isomorphic to the free wreath product $\widehat{\Gamma}\wr_{\ast}S_{N}^{+}$.

We will now give an analogue of this result for pairs of discrete groups. Here is how the definition has to be modified.

\begin{de}
Let $\Gamma$ be a discrete group and let $\Lambda$ be a subgroup of $\Gamma$. The universal $*$-algebra $\O(H_{N}^{+}(\Gamma, \Lambda))$ is defined as the quotient of $\O(H_{N}^{+}(\Gamma))$ by the Hopf $*$-ideal generated by the relations
\begin{equation*}
\sum_{k=1}^{N}a(g)_{ik} = \sum_{k=1}^{N}a(g)_{jk}
\end{equation*}
for all $1\leqslant i, j \leqslant N$ and all $g\in \Lambda$.
\end{de}

\begin{rem}
Note that setting $s(g)$ to be the sum appearing in the definition (if it is independent of $i$), the set of elements $g$ such that $s(g)$ is well-defined must be a group : $s(e) = 1$, $s(g^{-1}) = S(s(g))$ (where $S$ denotes the antipode) and $s(gh) = s(g)s(h)$. Indeed,
\begin{align*}
s(g)\sum_{k=1}^{N}a_{ik}(h) & = \sum_{k, k' = 1}^{N}a_{ik'}(g)a_{ik}(h) \\
& = \sum_{k, k'=1}^{N}\delta_{k, k'}a_{ik}(gh) \\
& = \sum_{k=1}^{N} a_{ik}(gh)
\end{align*}
so that the sum in the last line does not depend on $i$.
\end{rem}

It is not difficult to check that the coproduct on $\O(H_{N}^{+}(\Gamma))$ again defines a coproduct on the quotient and that this yields a compact quantum group. This will however be a consequence of the next result.

\begin{prop}\label{prop:universaldiscrete}
There exists a $*$-isomorphism
\begin{equation*}
\Phi : \O(\widehat{\Gamma}\wr_{\ast, \widehat{\Lambda}}S_{N}^{+})\to \O(H_{N}^{+}(\Gamma, \Lambda))
\end{equation*}
such that push-forward of the coproduct maps $a_{ij}(g)$ to
\begin{equation*}
\sum_{k=1}^{N} a_{ik}(g)\otimes a_{kj}(g).
\end{equation*}
\end{prop}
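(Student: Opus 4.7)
The plan is to reduce this to J.~Bichon's isomorphism $\Psi : \O(\widehat{\Gamma}\wr_{\ast}S_{N}^{+}) \to \O(H_{N}^{+}(\Gamma))$ from \cite[Ex 2.5]{bichon2004free}, which sends $\nu_{i}(g) p_{ij}$ to $a_{ij}(g)$, and then to identify the ideals that produce the two amalgamated versions as quotients.

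The key observation I would exploit is that in $\O(\widehat{\Gamma}\wr_{\ast}S_{N}^{+})$ one has
\begin{equation*}
\nu_i(g) = \nu_i(g)\sum_{k=1}^N p_{ik} = \sum_{k=1}^N \nu_i(g) p_{ik},
\end{equation*}
thanks to $\sum_k p_{ik}=1$ and the commutation relation $[\nu_i(g),p_{ik}]=0$. Pushing this through $\Psi$ gives the crucial identification $\Psi(\nu_i(g))=\sum_k a_{ik}(g)$, and hence $\Psi$ sends $\nu_i(g)-\nu_j(g)$ to $\sum_k a_{ik}(g)-\sum_k a_{jk}(g)$. Now by the remark following Definition~\ref{de:commutationrelations}, $\O(\widehat{\Gamma}\wr_{\ast,\widehat{\Lambda}}S_N^+)$ is the quotient of $\O(\widehat{\Gamma}\wr_{\ast}S_N^+)$ by the $*$-ideal $I$ generated by $\nu_i(g)-\nu_j(g)$ for $g\in\Lambda$ and $i\neq j$; by the preceding proposition, $I$ is in fact a Hopf $*$-ideal. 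Its image $\Psi(I)$ is then the Hopf $*$-ideal of $\O(H_N^+(\Gamma))$ generated by $\sum_k a_{ik}(g)-\sum_k a_{jk}(g)$ for $g\in\Lambda$, which is exactly the defining ideal of $\O(H_N^+(\Gamma,\Lambda))$. Therefore $\Psi$ descends to the announced $*$-isomorphism $\Phi$, characterised by $\Phi(\nu_i(g)p_{ij})=a_{ij}(g)$.

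Finally, the coproduct statement is a short verification: expanding $\D(\nu_i(g) p_{ij})$ using the formulas of the previous proposition and simplifying with the magic unitary relation $p_{im}p_{il}=\delta_{ml}p_{im}$ produces $\sum_m \nu_i(g)p_{im}\otimes \nu_m(g)p_{mj}$, whose image under $\Phi\otimes\Phi$ is the stated $\sum_m a_{im}(g)\otimes a_{mj}(g)$. The real technical work, namely that the elements $\nu_i(g)p_{ij}$ satisfy the column relation $a_{ji}(g)a_{ki}(h)=\delta_{jk}a_{ji}(gh)$ despite the fact that $\nu_j(g)$ does not a priori commute with $p_{ki}$ for $k\neq j$, is buried in Bichon's original proof rather than in the present one; once that result is invoked, the extension to the amalgamated setting is essentially free.
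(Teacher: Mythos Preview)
Your proof is correct and follows essentially the same route as the paper's. Both arguments rest on Bichon's isomorphism for the non-amalgamated case and then pass to the quotients; the only difference is organizational: the paper rebuilds the two maps $\phi$ and $\psi$ explicitly (as in \cite[Ex 2.5]{bichon2004free}) and checks that each factors through the additional $\Lambda$-relations, whereas you invoke Bichon's isomorphism as a black box and match the two defining ideals under it via the identity $\Psi(\nu_i(g))=\sum_k a_{ik}(g)$. Your coproduct verification is the same computation carried out implicitly in the paper.
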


\begin{proof}
As in \cite[Ex 2.5]{bichon2004free}, set
\begin{equation*}
\phi_{i}(g) = \sum_{k=1}^{N}a_{ik}(g) \quad \& \quad \phi_{N+1}(p_{ij}) = a_{ij}(e)
\end{equation*}
to define by universality a $*$-homomorphism
\begin{equation*}
\C[\Gamma]^{\ast N}\ast S_{N}^{+} \to \O(H_{N}^{+}(\Gamma, \Lambda)).
\end{equation*}
Observing that $\phi_{i}(g) = \phi_{j}(g)$ for all $1\leqslant i, j \leqslant N$ and all $g\in \Lambda$, we see that this map factors to a $*$-homomorphism
\begin{equation*}
\phi : \O(\widehat{\Gamma}\wr_{\ast, \widehat{\Lambda}} S_{N}^{+}) \to \O(H_{N}^{+}(\Gamma, \Lambda)).
\end{equation*}
Conversely, one easily checks that the elements
\begin{equation*}
\psi_{ij}(g) = \nu_{i}(g)p_{ij}\in \O(\widehat{\Gamma}\wr_{\ast, \widehat{\Lambda}}S_{N}^{+}))
\end{equation*}
satisfy the defining relations of $\O(H_{N}^{+}(\Gamma))$. Moreover, for $g\in \Lambda$ and $1\leqslant i, j\leqslant N$,
\begin{equation*}
\sum_{k=1}^{N}\psi_{ik}(g) = \nu_{i}(g) = \nu_{j}(g) = \sum_{k=1}^{N}\psi_{jk}(g)
\end{equation*}
so that these elements even satisfy the defining relations of $\O(H_{N}^{+}(\Gamma, \Lambda))$. By universality, this induces a $*$-homomorphism 
\begin{equation*}
\psi : \O(H_{N}^{+}(\Gamma, \Lambda)) \to \O(\widehat{\Gamma}\wr_{\ast, \widehat{\Lambda}}S_{N}^{+})
\end{equation*}
which a straightforward computation shows to be inverse to $\phi$. The formula for the coproduct is then obtained by conjugating by these $*$-homomorphisms.
\end{proof}

When $\Gamma$ (hence also $\Lambda$) is abelian, another description of the amalgamated free wreath product is possible, involving the notion of \emph{glued direct product} from \cite{gromada2020gluing} in a specific case. Because $\Gamma$ is assumed to be abelian, $\Lambda$ is normal and we can consider the usual free wreath product $H_{N}^{+}(\Gamma/\Lambda) = H_{N}^{+}(\Gamma/\Lambda, \{e\})$ and build its direct product in the sense of compact quantum groups with $\widehat{\Gamma}$. At the level of CQG-algebras, this yields
\begin{equation*}
B = \O(H_{N}^{+}(\Gamma/\Lambda))\otimes \O(\widehat{\Gamma}).
\end{equation*}
Now, we denote by $\O(H_{N}^{+}(\Gamma/\Lambda)\widetilde{\times}\widehat{\Gamma})$ the $*$-subalgebra of $B$ generated by the elements $a(g)_{ij}g$ for all $g\in \Gamma$ and $1\leqslant i, j\leqslant N$. This is a CQG-algebra for the restriction of the coproduct, and the corresponding compact quantum group is called the glued direct product of $H_{N}^{+}(\Gamma/\Lambda)$ and $\widehat{\Gamma}$.

\begin{rem}
The previous construction is slightly outside the setting of \cite{gromada2020gluing}. However, it can easily be made to fit in if $\Gamma$ is finitely generated. In that case, if $\SS$ is a finite generating set, then one can consider $H_{N}^{+}(\Gamma/\Lambda)$ and $\widehat{\Gamma}$ as compact matrix quantum groups with fundamental representations respectively $\oplus_{g\in \SS}a(g)$ and $\oplus_{g\in \SS}g$, and the glued product construction then coincides with ours.
\end{rem}

The proof of the next theorem will make use of an explicit description of the representation theory of $H_{N}^{+}(\Gamma, \Lambda)$ obtained in \cite[Thm 4.4]{freslon2018representation}. Strictly speaking, this result concerns the partition quantum groups $\G_{N}(\CC_{\Gamma, \Lambda, S})$, but we will prove that it coincides with $H_{N}^{+}(\Gamma, \Lambda)$ in Proposition \ref{prop:partitiondescription}. Let $F(\Gamma)$ be the free monoid on $\Gamma$ and consider the relation $\sim$ defined on it by
\begin{equation*}
w_{1}\dots (w_{i}.\lambda)w_{i+1}\dots w_{n}\sim w_{1}\dots w_{i}(\lambda.w_{i+1})\dots w_{n}
\end{equation*}
for any $1\leqslant i\leqslant n-1$ and $\lambda\in \Lambda$. The quotient by the transitive closure of the relation will be denoted by $W(\Gamma, \Lambda)$. Then, the one-dimensional representations of $H_{N}^{+}(\Gamma, \Lambda)$ can be indexed by the elements of $\Lambda$ and all the other ones by elements of $W(\Gamma, \Lambda)$. Moreover, if $g_{1}\cdots g_{n}\in W(\Gamma, \Lambda)$ and $\lambda\in \Lambda$, then $\lambda\otimes (g_{1}\cdots g_{n}) = (\lambda g_{1})g_{2}\cdots g_{n}$. We are now ready for the result.

\begin{thm}\label{thm:gluedproduct}
If $\Gamma$ is abelian, then there is an isomorphism of compact quantum groups
\begin{equation*}
H_{N}^{+}(\Gamma, \Lambda) \cong H_{N}^{+}(\Gamma/\Lambda)\widetilde{\times}\widehat{\Gamma}.
\end{equation*}
\end{thm}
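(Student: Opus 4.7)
The natural candidate is the Hopf $*$-algebra morphism $\Phi\colon \O(H_N^+(\Gamma,\Lambda)) \to \O(H_N^+(\Gamma/\Lambda))\otimes \C[\Gamma]$ defined on generators by $\Phi(a_{ij}(g)) = a(\bar g)_{ij}\otimes g$, where $\bar g$ denotes the class of $g$ in $\Gamma/\Lambda$. Well-definedness reduces to checking the defining relations of $\O(H_N^+(\Gamma,\Lambda))$ on the target elements: the multiplicative and unit relations follow from their counterparts in $\O(H_N^+(\Gamma/\Lambda))$ together with the group law in $\C[\Gamma]$, while for $g\in\Lambda$ we have $\bar g = \bar e$ and hence $\sum_k a(\bar g)_{ik}\otimes g = 1\otimes g$, which is independent of $i$. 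Coproduct compatibility is an immediate check on generators. The image of $\Phi$ is by construction the $*$-subalgebra generated by the elements $a(\bar g)_{ij}\otimes g$, that is $\O(H_N^+(\Gamma/\Lambda)\widetilde{\times}\widehat{\Gamma})$, so $\Phi$ yields a surjective Hopf $*$-algebra morphism onto the glued product.

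The remaining task is injectivity, for which I would use the description of $\Irr(H_N^+(\Gamma,\Lambda))$ recalled just before the statement. The abelian hypothesis on $\Gamma$ enters in a crucial way: the sliding equivalence $\sim$ on $F(\Gamma)$ preserves the total product $g_1\cdots g_n\in\Gamma$, so the map $[g_1\cdots g_n]\mapsto ((\bar g_1,\ldots,\bar g_n),\,g_1\cdots g_n)$ identifies $W(\Gamma,\Lambda)$ with the fibered product $\{((\bar w_1,\ldots,\bar w_n),P)\in F(\Gamma/\Lambda)\times\Gamma : \pi(P) = \bar w_1\cdots\bar w_n\}$, where $\pi\colon\Gamma\to\Gamma/\Lambda$ is the quotient map. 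On the other side, a product $v(g_1)\cdots v(g_n) = (a(\bar g_1)\otimes\cdots\otimes a(\bar g_n))\otimes(g_1\cdots g_n)$ decomposes, as a corepresentation of $H_N^+(\Gamma/\Lambda)\times\widehat{\Gamma}$, into irreducibles of the form $V_{\bar w}\otimes(g_1\cdots g_n)$ with $\bar w_1\cdots\bar w_k = \bar g_1\cdots\bar g_n$ in $\Gamma/\Lambda$; restricting to the glued product yields precisely the irreducibles satisfying the coherence $\pi(P)=\bar w_1\cdots\bar w_k$. Together with the one-dimensional representations $1\otimes\lambda$ for $\lambda\in\Lambda$, which lie in the glued product because $\sum_k a(\bar e)_{ik}\otimes\lambda = 1\otimes\lambda$, this reproduces exactly the label set for $\Irr(H_N^+(\Gamma,\Lambda))$ with matching dimensions, and Peter-Weyl then forces $\Phi$ to be injective.

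The main obstacle is this last step, and specifically the comparison of fusion rules of $H_N^+(\Gamma,\Lambda)$ with those inherited by the glued product from $H_N^+(\Gamma/\Lambda)\times\widehat{\Gamma}$. The abelianness of $\Gamma$ is essential to make the global product map $W(\Gamma,\Lambda)\to\Gamma$ well-defined and hence to identify the two sets of irreducibles. An alternative would be to construct an explicit inverse, but since the glued product is not given by a universal property this seems less natural than the representation-theoretic route outlined above.
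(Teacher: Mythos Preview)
Your construction of $\Phi$ and the verification that it surjects onto the glued product are exactly what the paper does. The paper then handles injectivity by an explicit induction on word length, proving the formula
\[
\Phi\bigl(u^{g_{1}}\lambda_{1}\cdots u^{g_{n}}\lambda_{n}\bigr)=\lambda_{1}\cdots\lambda_{n}\,u^{[g_{1}]}\cdots u^{[g_{n}]}\,g_{1}\cdots g_{n}
\]
via the fusion rules of \cite{freslon2018representation}, and then observing that the right-hand side is an irreducible of $H_{N}^{+}(\Gamma/\Lambda)\times\widehat{\Gamma}$ whose coefficients lie in the glued subalgebra, hence an irreducible of the glued product. Since $\Phi$ carries irreducibles to irreducibles, it is injective.

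Your fibered-product description of $W(\Gamma,\Lambda)$ is a pleasant repackaging of the same data, but the last step as you have written it is not quite complete. Knowing abstractly that $\Irr(H_{N}^{+}(\Gamma,\Lambda))$ and $\Irr$ of the glued product are in dimension-preserving bijection does \emph{not} force an arbitrary surjection $\Phi$ between the two CQG-algebras to be injective: a surjection can send an irreducible to a reducible representation. What is needed is that $\Phi$ itself realises the bijection, i.e.\ that each $\Phi(u^{g_{1}}\cdots u^{g_{n}})$ is irreducible. Your decomposition of $\Phi(a(g_{1})\otimes\cdots\otimes a(g_{n}))$ is the right object to look at, but to close the argument you must compare it term-by-term with the decomposition of $a(g_{1})\otimes\cdots\otimes a(g_{n})$ inside $H_{N}^{+}(\Gamma,\Lambda)$ and see that the multiplicities match --- and that comparison is precisely the inductive computation the paper performs. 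A minor side remark: the sliding relation already preserves the total product $g_{1}\cdots g_{n}$ without any commutativity assumption; the abelian hypothesis is rather used to show that two words with the same image in $F(\Gamma/\Lambda)\times\Gamma$ are $\sim$-equivalent (one has to commute an element of $\Lambda$ past a letter $g$ inside a single slot), and in the paper's induction to move the $\lambda_{i}$'s to the front.
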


\begin{proof}
Let us set, for $g\in \Gamma$, $b(g) = a([g])g\in \O(H_{N}^{+}(\Gamma/\Lambda)\widetilde{\times}\widehat{\Gamma})$, where $[g]$ denotes the image of $g$ in $\Gamma/\Lambda$. Then, for any $1\leqslant i, j\leqslant N$,
\begin{align*}
b(g)_{ij}b(h)_{ik} & = a([g])_{ij}ga([h])_{ik}h \\
& = a([g])_{ij}a([h])_{ik}gh \\
& = \delta_{jk}a([gh])_{ij}gh \\
& = \delta_{jk}b(gh)_{ij}
\end{align*}
and similarly $b(g)_{ji}b(g)_{ki} = \delta_{jk}b(gh)_{ji}$. Moreover, for $g\in \Lambda$ and $1\leqslant i\leqslant N$,
\begin{equation*}
\sum_{k=1}^{N}b(g)_{ik} = \left(\sum_{k=1}^{N}a([g])_{ik}\right)g = \left(\sum_{k=1}^{N}a([e])_{ik}\right)g = g.
\end{equation*}
As a consequence, there is a surjective $*$-homomorphism
\begin{equation*}
\Phi : \O(H_{N}^{+}(\Gamma, \Lambda)) \to \O(H_{N}^{+}(\Gamma/\Lambda)\widetilde{\times}\widehat{\Gamma})
\end{equation*}
sending $a(g)$ to $b(g)$.

To prove injectivity, it is enough to prove that the image of any irreducible representation under $\Phi$ is irreducible. Let us first prove by induction on $n$ that if $g_{1}, \cdots, g_{n}$ are elements of $\Gamma$ and $\lambda_{1}, \cdots, \lambda_{n}$ are elements of $\Lambda$, then
\begin{equation*}
\Phi(u^{g_{1}}\lambda_{1}\cdots u^{g_{n}}\lambda_{n}) = \lambda_{1}\cdots \lambda_{n}u^{[g_{1}]}\cdots u^{[g_{n}]}g_{1}\cdots g_{n}.
\end{equation*}
For $n = 1$, the result is clear. Assume that it holds for some $n$. If $g_{n}g_{n+1}\notin \Lambda$, then the fusion rules computed in \cite[Prop 4.6]{freslon2018representation} imply that
\begin{align*}
\Phi(u^{g_{1}}\lambda_{1}\cdots u^{g_{n}}\lambda_{n}u^{g_{n+1}}\lambda_{n+1}) & = \Phi(\lambda_{1}\cdots\lambda_{n+1}u^{g_{1}}\cdots u^{g_{n}}\otimes u^{g_{n+1}}) \\
& = \lambda_{1}\cdots\lambda_{n+1}u^{[g_{1}]}\cdots u^{[g_{n}]}\otimes u^{[g_{n+1}]}g_{1}\cdots g_{n+1} \\
& = \lambda_{1}\cdots\lambda_{n+1}u^{[g_{1}]}\cdots u^{[g_{n}]}u^{[g_{n+1}]}g_{1}\cdots g_{n+1} \\
\end{align*}
Otherwise,
\begin{align*}
\Phi(u^{g_{1}}\lambda_{1}\cdots u^{g_{n}}\lambda_{n}g_{n+1}\lambda_{n+1}) & = \Phi\left((\lambda_{1}\cdots\lambda_{n+1}\left(u^{g_{1}}\cdots u^{g_{n}}\otimes u^{g_{n+1}} - u^{g_{1}}\cdots u^{g_{n}g_{n+1}}\right)\right)) \\
& = \lambda_{1}\cdots\lambda_{n+1}\left(u^{[g_{1}]}\cdots u^{[g_{n}]}\otimes u^{[g_{n+1}]} - u^{[g_{1}]}\cdots u^{[g_{n}g_{n+1}]}\right) g_{1}\cdots g_{n+1} \\
& = \lambda_{1}\cdots\lambda_{n+1}u^{[g_{1}]}\cdots u^{[g_{n}]}u^{[g_{n+1}]}g_{1}\cdots g_{n+1} \\
\end{align*}
so that our claim is proven.

To conclude we have to check that the images are irreducible representations. Note that the irreducible representations of $H_{N}^{+}(\Gamma/\Lambda)\times \widehat{\Gamma}$ are, by \cite[Thm 2.11]{wang1995tensor}, of the form $gu^{[g_{1}]}\cdots u^{[g_{n}]}$ for $g, g_{1}, \cdots, g_{n}\in \Gamma$. Moreover, $\O(H_{N}^{+}(\Gamma/\Lambda)\widetilde{\times}\widehat{\Gamma})$ being a Hopf $*$-subalgebra of $\O(H_{N}^{+}(\Gamma/\Lambda)\times \widehat{\Gamma})$, its irreducible representations are exactly those of $H_{N}^{+}(\Gamma/\Lambda)\times \widehat{\Gamma}$ whose coefficients all lie in $\O(H_{N}^{+}(\Gamma/\Lambda)\widetilde{\times}\widehat{\Gamma})$. In other words, the images under $\Phi$ of irreducible representations are irreducible, hence $\Phi$ is an isomorphism.
\end{proof}

Applying Theorem \ref{thm:gluedproduct} to cyclic groups, we deduce that if $d\mid k$, then
\begin{equation*}
\left(\Z_{d}\wr_{\ast}S_{N}^{+}\right)\widetilde{\times}\Z_{k}\cong \Z_{k}\wr_{\ast, d\Z_{k}}S_{N}^{+},
\end{equation*}
thereby recovering a result by P. Tarrago and M. Weber \cite{tarrago2015unitary}.

\begin{rem}
In the case of cyclic groups, the glued product was introduced earlier in \cite{tarrago2015unitary} under the name of \emph{$k$-tensor complexification}. These complexification were later studied by D. Gromada and M. Weber in \cite{gromada2019new} were they consider several variants of them and consider them as \emph{extensions} of a given compact quantum group by $\Z_{2}$. The term ``extension'' was used there in a broad sense, but Proposition \ref{prop:exactsequence} will show that in our case, the glued direct product yields a genuine extension in the sense of Hopf algebras. It is nevertheless important to note that the extension does not involve $\Z_{k}$ itself but rather the subgroup $d\Z_{k}$.
\end{rem}

\subsection{The classical construction}

As is usual with such constructions in compact quantum group theory, the free wreath product with amalgamation has a classical counterpart obtained by considering the abelianization of the CQG-algebra. To describe this in more details, first note that if $G$ is a classical compact group, then a dual quantum subgroup is simply given by a quotient compact group $H$. 

\begin{de}
Let $G$ be a compact group and $\pi : G\to H$ a quotient map. The \emph{wreath product of $G$ by $S_{N}$ with amalgamation over $H$} is defined to be the classical compact group corresponding to the CQG-algebra $\O(G\wr_{\ast, H}S_{N}^{+})_{\text{ab}}$, where the index ``ab'' denotes abelianization.
\end{de}

In order to give alternate descriptions of this object, let us recall some elementary facts. First, the abelianization of the amalgamated free product $\O(G)^{\ast_{\O(H)}^{N}}$ is the amalgamated direct product
\begin{equation*}
\O(G)^{\times_{\O(H)}^{N}} = \O(G^{\times_{H}^{N}}).
\end{equation*}
Second, the amalgamated direct product $G^{\times_{H}^{N}}$ is the subgroup of $G^{\times N}$ of elements whose image in $H^{\times N}$ is diagonal. Third, the set of ``diagonal'' elements in $H\wr S_{N}$ is a subgroup isomorphic to $H\times S_{N}$.

\begin{prop}
Let $G$ be a compact group and let $\pi : G\to H$ be a quotient map. The following compact groups are isomorphic to the wreath product of $G$ by $S_{N}$ with amalgamation over $H$ :
\begin{enumerate}
\item $G^{\times^{N}_{H}}\rtimes S_{N}$, where $S_{N}$ acts by permutation of the copies ;
\item The pre-image of $H\times S_{N}\subset H\wr S_{N}$ under the surjective group homomorphism $G\wr S_{N}\to H\wr S_{N}$ induced by $\pi$.
\end{enumerate}
\end{prop}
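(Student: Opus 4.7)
The plan is to first compute the Hopf $\ast$-algebra abelianization of $\O(G \wr_{\ast, H} S_N^+)$ and identify it with the function algebra of $G^{\times_H^N} \rtimes S_N$; the description in (2) will then follow from the two elementary facts recalled just before the statement.

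To begin with, I would abelianize the presentation. By the preamble, the abelianization of the amalgamated free product $\O(G)^{\ast_{\O(H)}^N}$ is $\O(G^{\times_H^N})$, and it is standard that $\O(S_N^+)_{\text{ab}} = \O(S_N)$. The underlying algebra of $\O(G \wr_{\ast, H} S_N^+)$ is a quotient of the free product of these two factors by the relations $[\nu_i(x), p_{ij}] = 0$; once we pass to commutative algebras the free product becomes a tensor product and these commutation relations are automatic, so
\begin{equation*}
\O(G \wr_{\ast, H} S_N^+)_{\text{ab}} \cong \O(G^{\times_H^N}) \otimes \O(S_N)
\end{equation*}
as commutative $\ast$-algebras.

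Next, I would compare the induced Hopf $\ast$-algebra structure with that of the semidirect product $G^{\times_H^N} \rtimes S_N$, in which $S_N$ acts by permuting the $N$ coordinates (this preserves $G^{\times_H^N}$ because the defining condition $\pi(g_1) = \cdots = \pi(g_N)$ is symmetric in the coordinates). Writing $(g;\sigma)(g';\tau) = (g \cdot \sigma(g'); \sigma\tau)$ with $\sigma(g')_i = g'_{\sigma^{-1}(i)}$, and $p_{ij}(\sigma) = \delta_{i, \sigma(j)}$, a direct evaluation of $f \mapsto f((g;\sigma)(g';\tau))$ on the generators $p_{ij}$ and $\nu_i(x)$ (the latter viewed as the function picking up $x$ on the $i$-th coordinate) reproduces exactly the formul\ae{}
\begin{equation*}
\D(p_{ij}) = \sum_{k=1}^N p_{ik} \otimes p_{kj}, \qquad \D(\nu_i(x)) = \sum_{k=1}^N (\nu_i \otimes \nu_k)(\D_G(x))(p_{ik} \otimes 1)
\end{equation*}
defining the coproduct on the quantum side. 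This establishes (1).

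For (2), the surjection $G \wr S_N \to H \wr S_N$ induced by $\pi$ is coordinate-wise on the $G^N$ factor and the identity on $S_N$, and the subgroup $H \times S_N \subset H \wr S_N$ consists of the diagonal tuples $(h, \dots, h; \sigma)$. Its preimage is therefore the set of $(g_1, \dots, g_N; \sigma)$ with $\pi(g_1) = \cdots = \pi(g_N)$, which coincides with $G^{\times_H^N} \rtimes S_N$ by the characterisation of $G^{\times_H^N}$ recalled earlier. Combined with (1), this proves the proposition. The main obstacle is the verification of the coproduct formul\ae{} in the second step: one has to carefully unwind the semidirect product multiplication to see that the classical formul\ae{} really reproduce the quantum ones, especially the mixed term involving both $\nu_i(x)$ and $p_{ik}$. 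Once that is settled, part (2) is immediate set-theoretic bookkeeping.
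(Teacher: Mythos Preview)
Your proof is correct and follows essentially the same route as the paper: abelianize the presentation to get $\O(G^{\times_H^N})\otimes \O(S_N)$ with the semidirect-product coproduct, and then read off the preimage description for (2). The only difference is one of exposition: the paper invokes the known fact that $\O(G\wr_{\ast}S_N^+)_{\text{ab}}=\O(G\wr S_N)$ and then quotients by the $\O(H)$-identifications (so the coproduct check is absorbed into that citation), whereas you abelianize the amalgamated version directly and verify the coproduct formul\ae{} by hand; for (2) the paper phrases the argument on the algebra side while you phrase it on the group side, but the content is identical.
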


\begin{proof}
\begin{enumerate}
\item The abelianization of $\O(G\wr_{\ast}S_{N}^{+})$ is $\O(G\wr S_{N})$, which is the CQG-algebra corresponding to the group $G^{\times N}\rtimes S_{N}$ with $S_{N}$ acting by permutation of the copies. The result therefore follows from the fact that $\O(G\wr_{\ast, H} S_{N}^{+})_{\text{ab}}$ is the quotient of $\O(G\wr S_{N})$ by the relations identifying the copies of $C(H)\subset C(G)$.
\item Recall that $\O(H)$ is seen as a subalgebra of $\O(G)$ through the map $f\mapsto f\circ \pi$. As a consequence, the amalgamated tensor product of $\O(G)$ over $\O(H)$ consists in quotienting $\O(G)^{\otimes N}$ by the relations $\pi(x_{i}) = \pi(x_{j})$ for all $x_{1}\otimes \cdots\otimes x_{N}\in \O(G)^{\otimes N}$ and all $1\leqslant i, j\leqslant N$. At the level of groups, this is equivalent to restricting to the subgroup of tuples of elements of $G$ having the same image in $H$, hence the result.
\end{enumerate}
\end{proof}

Using this result, we can give yet another characterization in the cyclic case. Recall that a matrix is called \emph{monomial} if it has exactly one non-zero coefficient in each row and column. The reflection group $H_{N}^{k} = H_{N}(\Z_{k})$ can then be described as the group of all monomial matrices in $M_{N}(\C)$ whose coefficients are $k$-th roots of unity.

\begin{prop}
Let $d, k\in \N$ with $d\mid k$. Then, the group $\Z_{k}\wr_{\Z_{d}}S_{N}$ is the group of all monomial matrices in $H_{N}^{k}$ such that all non-zero coefficients have equal $k/d$-th power.
\end{prop}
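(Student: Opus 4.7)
The plan is to invoke the second description from the previous proposition: $\Z_k\wr_{\Z_d}S_N$ is the pre-image of $\Z_d\times S_N\subset \Z_d\wr S_N$ under the surjective group morphism $\Z_k\wr S_N\to \Z_d\wr S_N$ induced by the quotient $\pi : \Z_k\to \Z_d$. Identifying $\Z_k$ with the group of $k$-th roots of unity, the map $\pi$ is concretely $\zeta\mapsto \zeta^{k/d}$, since the generator of $\Z_k$ is sent to the generator of $\Z_d$.

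Next I would translate these wreath products into the monomial matrix picture. Under the identification $\Z_k\wr S_N\cong H_N^k$ (and similarly for $d$) recalled just before the statement, the wreath product morphism coming from $\pi$ corresponds to the entrywise power map
\begin{equation*}
M\longmapsto M^{(k/d)}
\end{equation*}
raising each coefficient of a monomial matrix to the power $k/d$; this is essentially tautological from the construction of the wreath product, as $\pi$ is applied coordinate by coordinate on the $\Z_k^N$ part while the $S_N$ part is left unchanged. In the same picture, the subgroup $\Z_d\times S_N\subset \Z_d\wr S_N$ is exactly the set of \emph{scalar monomial matrices}, i.e.\ matrices of the form $\zeta P$ where $\zeta$ is a $d$-th root of unity and $P$ is a permutation matrix; equivalently, monomial matrices in $H_N^d$ whose non-zero coefficients are all equal.

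Combining these two observations, the pre-image in $H_N^k$ of $\Z_d\times S_N$ is precisely the set of monomial matrices $M\in H_N^k$ such that $M^{(k/d)}$ has all its non-zero entries equal, which is to say that the non-zero entries of $M$ themselves all have the same $(k/d)$-th power. This is exactly the description claimed in the proposition.

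The argument is really just a definition-chase, and I do not expect any genuine obstacle. The only point that deserves a line of justification is the identification of the induced wreath product morphism with the entrywise $(k/d)$-th power map, which follows at once from the componentwise nature of the wreath product construction and from the explicit description of $\pi$ on roots of unity.
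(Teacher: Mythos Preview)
Your argument is correct and takes a genuinely different route from the paper's. You use characterization (2) of the preceding proposition directly: treating $\Z_d$ as the quotient $H$ of $G=\Z_k$ via $\pi(\zeta)=\zeta^{k/d}$, you identify the induced map $H_N^k\to H_N^d$ with the entrywise $(k/d)$-th power and simply read off the pre-image of the scalar monomial matrices $\Z_d\times S_N$. The paper instead appeals to Theorem~\ref{thm:gluedproduct}: given $M\in H_N^k$ whose non-zero entries $z_i$ have common $(k/d)$-th power, it factors $M$ as the scalar $z_1$ times a monomial matrix with $(k/d)$-th root of unity entries, recognises this as an element of $H_N(\Z_{k/d})\widetilde{\times}\Z_k$, and then invokes the glued-product isomorphism with $\Z_k\wr_{\Z_d}S_N$. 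Your approach is shorter and fully elementary, needing nothing beyond the immediately preceding proposition; the paper's approach, while heavier, has the virtue of tying the result back to the structural glued-product description established earlier.
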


\begin{proof}
Set $\omega = e^{2i\pi/k}$ so that $\Z_{d}$ is seen as the subgroup generated by $\omega^{k/d}$. Let now $M\in H_{N}^{k}$ and let $(z_{1}, \cdots, z_{N})$ be its non-zero coefficients. Assume that $z_{i}^{d} = z_{1}^{d}$ for all $2\leqslant i\leqslant N$. Then, there exists $j_{1}, \cdots j_{N}$ such that $z_{i} = \omega^{kj_{i}/d}z_{1}$. In other words, $M$ is the product of a monomial matrix with coefficients $(\omega^{kj_{1}/d}, \cdots, \omega^{kj_{N}/d})$ and the diagonal matrix with constant coefficient $z_{1}$. This is an element of $H_{N}(\Z_{k/d})\widetilde{\times}\Z_{k}$, which by Theorem \ref{thm:gluedproduct} is isomorphic to $\Z_{k}\wr_{\Z_{d}}S_{N}$. We have therefore proven one inclusion, and the converse one is trivial.
\end{proof}

\section{Coloured partitions and applications}\label{sec:partitions}

We will now investigate the interplay between free wreath products with amalgamation for duals of discrete groups and coloured partitions in the sense of \cite{freslon2014partition}. This will in particular clarify the connection between this work and the original definition from \cite{freslon2017noncrossing}.

\subsection{Free wreath products with amalgamation as partition quantum groups}

As explained in the introduction, free wreath products with amalgamation originally appeared in \cite{freslon2017noncrossing}, under the name of free wreath products of pairs, as a specific family of partition quantum groups. They were therefore defined through a category of partitions, which is denoted by $\CC_{\Gamma, \Lambda, S}$. Let us recall that definition.

\begin{de}
Let $\Gamma$ be a discrete group with a symmetric generating set $\SS$ not containing the neutral element and let $\Lambda$ be a subgroup. We consider $\SS$ as a colour set with involution $g\to g^{-1}$. Then, $\CC_{\Gamma, \SS}$ is the category of all non-crossing $\SS$-coloured partitions such that in each block, the product of the colours in the upper row (from left to right) equals the product of the colours in the lower row (also from left to right) as elements of $\Gamma$.

If now $g\in \Lambda$ can be written as a product $g = g_{1}\cdots g_{n}$ of elements of $\SS$, we set
\begin{center}
\begin{tikzpicture}[scale=0.5]
\draw (-2.5,0.5) -- (2.5,0.5);
\draw (-2.5,-0.5) -- (2.5,-0.5);

\draw (-2.5,0.5) -- (-2.5,1.5);
\draw (2.5,0.5) -- (2.5,1.5);
\draw (-2.5,-0.5) -- (-2.5,-1.5);
\draw (2.5,-0.5) -- (2.5,-1.5);

\draw (-1,0.5) -- (-1,1.5);
\draw (1,0.5) -- (1,1.5);
\draw (-1,-0.5) -- (-1,-1.5);
\draw (1,-0.5) -- (1,-1.5);

\draw (0,1.5) node[below]{$\dots$};
\draw (0,-1.5) node[above]{$\dots$};

\draw (-2.5,1.5) node[above]{$g_{1}$};
\draw (-1,1.5) node[above]{$g_{2}$};
\draw (1,1.5) node[above]{$g_{n-1}$};
\draw (2.5,1.5) node[above]{$g_{n}$};

\draw (-2.5,-1.5) node[below]{$g_{1}$};
\draw (-1,-1.5) node[below]{$g_{2}$};
\draw (1,-1.5) node[below]{$g_{n-1}$};
\draw (2.5,-1.5) node[below]{$g_{n}$};

\draw (-3,0) node[left]{$\beta_{g} = $};
\end{tikzpicture}
\end{center}
and let $\CC_{\Gamma, \Lambda, \SS}$ be the category of partitions generated by $\CC_{\Gamma, \SS}$ and $\beta_{g}$ for all $g\in \lambda$.
\end{de}

We will now prove, using the universal algebra picture, that this defines the same object as the two previous constructions. The idea is simply that adding a partition to a category of partitions is equivalent to forcing a linear a map to intertwine two representations, hence amounts to adding a polynomial relation between the generators (see for instance \cite{freslon2017noncrossing} for details and illustrations of this principle).

\begin{prop}\label{prop:partitiondescription}
There is an isomorphism of compact quantum groups
\begin{equation*}
\G_{N}(\CC_{\Gamma, \Lambda, \SS}) \cong H_{N}^{+}(\Gamma, \Lambda).
\end{equation*}
\end{prop}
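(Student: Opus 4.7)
The plan is to leverage the Tannaka--Krein principle recalled just above the statement: adjoining a partition $p$ to a generating category is equivalent to forcing $T_p$ to be an intertwiner, which in turn translates into polynomial relations on the matrix coefficients of the fundamental representations. The base case $\Lambda = \{e\}$ is already the combinatorial description of the free wreath product $\widehat{\Gamma}\wr_{\ast}S_N^+$ obtained in \cite{lemeux2013haagerup}, which together with Bichon's identification $\widehat{\Gamma}\wr_{\ast}S_N^+ \cong H_N^+(\Gamma)$ recalled just before Proposition \ref{prop:universaldiscrete} yields $\G_N(\CC_{\Gamma, \SS}) \cong H_N^+(\Gamma)$. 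Under this identification the fundamental representation $u^s$ corresponds to $a(s) = (a_{ij}(s))_{i,j}$ for $s\in\SS$, and it remains to understand the additional relations imposed by adjoining $\beta_g$ for $g\in\Lambda$.

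Since the $2n$ points of $\beta_g$ form a single block, a direct inspection shows that $T_{\beta_g}(e_{i_1}\otimes\cdots\otimes e_{i_n})$ equals $\sum_{j=1}^N e_j^{\otimes n}$ when $i_1 = \cdots = i_n$ and vanishes otherwise. Writing out the intertwining relation $T_{\beta_g}u^{\otimes w} = u^{\otimes w}T_{\beta_g}$ with $w = g_1\cdots g_n$ entry by entry, and telescoping the length-$n$ monomials $a_{kj_1}(g_1)\cdots a_{kj_n}(g_n)$ and $a_{i_1 k}(g_1)\cdots a_{i_n k}(g_n)$ using the defining relations
\begin{equation*}
a_{ij}(g)a_{ik}(h) = \delta_{jk}a_{ij}(gh) \quad\text{and}\quad a_{ji}(g)a_{ki}(h) = \delta_{jk}a_{ji}(gh)
\end{equation*}
of $H_N^+(\Gamma)$ down to single generators $a_{kj}(g)$ and $a_{ik}(g)$, I expect the intertwining condition to collapse to
\begin{equation*}
\sum_{k=1}^N a_{ik}(g) = \sum_{k=1}^N a_{kj}(g) \qquad (1\leqslant i,j\leqslant N).
\end{equation*}

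It then remains to match the Hopf $*$-ideal generated by this relation with the one defining $H_N^+(\Gamma, \Lambda)$, which a priori only imposes the weaker row-sum equalities $\sum_k a_{ik}(g) = \sum_k a_{jk}(g)$. One inclusion is tautological. For the other, the defining ideal of $H_N^+(\Gamma, \Lambda)$ is by construction stable under the antipode $S(a_{ij}(g)) = a_{ji}(g^{-1})$, and since $\Lambda$ is a group this yields column-sum constancy of $a(g)$ for every $g\in\Lambda$; a counting identity $Ns(g) = \sum_{i,j}a_{ij}(g) = Nt(g)$ then forces row sum equal to column sum, recovering the full relation above. The main technical step is the telescoping in the second paragraph, where one must carefully track a common row or column index so that the length-$n$ products collapse correctly; once this is verified, the remainder is formal.
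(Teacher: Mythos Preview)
Your proposal is correct and follows essentially the same strategy as the paper: reduce to the known isomorphism $\G_{N}(\CC_{\Gamma,\SS})\cong H_{N}^{+}(\Gamma)$ from \cite{lemeux2013haagerup}, then translate the extra generators $\beta_{g}$ into relations on the matrices $a(g)$. The telescoping you outline does work exactly as stated, since $a_{ki_{1}}(g_{1})\cdots a_{ki_{n}}(g_{n})=[i_{1}=\cdots=i_{n}]\,a_{ki_{1}}(g)$ in $H_{N}^{+}(\Gamma)$ and similarly on the other side.

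The only difference from the paper is in the final step. The paper observes directly that $T_{\beta_{g}}$ is (up to scalar) the rank-one projection onto $\xi=\sum_{i}e_{i}$ once one restricts to the diagonal copy of $\C^{N}$ inside $(\C^{N})^{\otimes n}$ (this restriction is legitimate because the ``all points in one block'' partition already lies in $\CC_{\Gamma,\SS}$, so the diagonal is invariant and carries $a(g)$). Commutation of a rank-one projection with the unitary $a(g)$ is then equivalent to $\xi$ being an eigenvector, i.e.\ to constancy of the row sums, which is verbatim the defining relation of $H_{N}^{+}(\Gamma,\Lambda)$; unitarity of $a(g)$ silently takes care of the column sums. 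Your route instead extracts the full condition $\sum_{k}a_{ik}(g)=\sum_{k}a_{kj}(g)$ and then matches it with the row-sum definition via the antipode and the counting identity $Ns(g)=\sum_{i,j}a_{ij}(g)=Nt(g)$. This is a perfectly valid alternative and makes explicit what the paper absorbs into the word ``fixed'', at the cost of one extra paragraph.
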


\begin{proof}
We already know from \cite{lemeux2013haagerup} that $\G_{N}(\CC_{\Gamma, \SS})\cong H_{N}^{+}(\Gamma)$ and that this isomorphism sends the representation corresponding to the $g$-identity partition to the representation $a(g) = (a(g)_{ij})_{1\leqslant i, j\leqslant N}$. Now, adding the partition $\beta_{g}$ is equivalent to adding the relations
\begin{equation*}
T_{\beta_{g}} a(g) = a(g) T_{\beta_{g}}.
\end{equation*}
But $T_{\beta_{g}}$ is, up to multiplication by a scalar, the orthogonal projection onto the vector $\xi = \sum e_{i}$. Thus, adding it to the category of partitions amounts to imposing conditions on $a(g)$ making $\xi$ fixed. Because
\begin{equation*}
a(g)\xi = \sum_{i, j=1}^{N}a_{ij}(g)\otimes e_{i},
\end{equation*}
$\xi$ is fixed if and only if the sum of the coefficients of $a(g)$ on each row is constant, which is exactly the relation defining $H_{N}^{+}(\Gamma, \Lambda)$.
\end{proof}

\subsection{Topological generation and monoidal equivalence}

We will now give two applications of the previous result to structural and approximation properties. Recall that given a compact quantum group $\G$ and a family of compact quantum subgroups $\O(\HH_{i})$ for $i$ ranging in some set $I$, we say that $\G$ is \emph{topologically generated} by these quantum subgroups if for any representations $u$ and $v$ of $\G$,
\begin{equation*}
\Mor_{\G}(u, v) = \bigcap_{i\in I}\Mor_{\HH_{i}}(u, v).
\end{equation*}
That property was first introduced by A. Chirvasitu in \cite{chirvasitu2015residually} to study residual finite-dimensionality of quantum group C*-algebras and we will exploit it here for the same purpose. Let us say that a compact quantum group $\G$ is \emph{residually finite} if its CQG-algebra $\O(\G)$ is residually finite-dimensional as a complex $*$-algebra, i.e. its finite-dimensional $*$-algebra representations separate the points.

Let $\Lambda < \Gamma$ be a pair of discrete groups. We have several natural quantum subgroups of $\widehat{\Gamma}\wr_{\ast, \widehat{\Lambda}}S_{N}^{+}$, and in particular we have $\widehat{\Gamma}\times S_{N}^{+} = \widehat{\Gamma}\wr_{\ast, \widehat{\Gamma}}S_{N}^{+}$ already mentioned in Proposition \ref{prop:directproduct}. Moreover, if $\overline{\Lambda}$ denotes the normal closure of $\Lambda$ (that is, the intersection of all normal subgroups of $\Gamma$ containing $\Lambda$), then $\widehat{\Gamma/\overline{\Lambda}}\wr_{\ast}S_{N}^{+}$ is a quantum subgroup of $\widehat{\Gamma}\wr_{\ast, \widehat{\Lambda}}S_{N}^{+}$. This in fact follows from the following result :

\begin{lem}
Let $\beta_{g}^{u}$ denote the upper part of the partition $\beta_{g}$ and set
\begin{equation*}
\CC = \left\langle \CC_{\Gamma, \Lambda, \SS}, \beta_{g}^{u} \mid g\in \Lambda\right\rangle.
\end{equation*}
Then,
\begin{equation*}
\G_{N}(\CC) = \widehat{\Gamma/\overline{\Lambda}}\wr_{\ast}S_{N}^{+}
\end{equation*}
\end{lem}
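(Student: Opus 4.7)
The plan is to use Proposition~\ref{prop:partitiondescription} to identify $\G_N(\CC_{\Gamma,\Lambda,\SS})$ with $H_N^+(\Gamma,\Lambda)$, and then to determine the additional $\ast$-algebra relations imposed by declaring each $T_{\beta_g^u}$ to be an intertwiner.

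First I would compute $T_{\beta_g^u}$ explicitly. If $g = g_{1}\cdots g_{n}$ is the decomposition used to build $\beta_g$, then $\beta_g^u\in\CC(g_1\cdots g_n,\emptyset)$ has a single block consisting of its $n$ upper points, so $T_{\beta_g^u}^{\ast}$ sends $1$ to the diagonal vector $\eta := \sum_{i=1}^{N}e_i^{\otimes n}$. Declaring $T_{\beta_g^u}$ to intertwine $u^{\otimes(g_1,\ldots,g_n)}$ and the trivial representation is therefore equivalent to requiring $\eta$ to be fixed by $a(g_1)\otimes\cdots\otimes a(g_n)$.

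The next step is to unfold this fixed-vector condition algebraically. Expanding $(a(g_1)\otimes\cdots\otimes a(g_n))\eta$ and using the multiplicativity relations of $H_N^+(\Gamma)$ of the form $a_{ji}(h)a_{ki}(h') = \delta_{jk}\,a_{ji}(hh')$ (same column) and $a_{ji}(h)a_{ji}(h') = a_{ji}(hh')$ (same entry), the coefficient of $e_{j_1}\otimes\cdots\otimes e_{j_n}$ should collapse to $0$ whenever the $j$'s are not all equal and to $\sum_i a_{ji}(g)$ when they are all equal to $j$. The fixed-vector condition then reduces to the single family of relations $\phi_j(g) := \sum_i a_{ji}(g) = 1$ for every $g\in\Lambda$ and every $j$, which manifestly depends on $g$ alone and not on the factorisation chosen.

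The final step is to identify the quotient. Through Proposition~\ref{prop:universaldiscrete}, the relation $\phi_j(g)=1$ reads $\nu_j(g)=1$ in $\O(\widehat{\Gamma}\wr_{\ast,\widehat{\Lambda}}S_N^+)$, so we are killing the two-sided ideal generated by $g-e$ for $g\in\Lambda$ inside each factor $\C[\Gamma]$ of the amalgamated free product. Since the coproduct on $\C[\Gamma]$ is group-like and each $\phi_j$ is a $\ast$-algebra homomorphism, this ideal must absorb $g-e$ for every element of the normal closure $\overline{\Lambda}$, so each factor becomes $\C[\Gamma/\overline{\Lambda}]$ and the amalgamation over $\C[\Lambda]$ trivialises. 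What is left is an ordinary free product $\C[\Gamma/\overline{\Lambda}]^{\ast N}$ modulo the unchanged commutation relations with the $p_{ij}$'s, that is, the defining presentation of $\O(\widehat{\Gamma/\overline{\Lambda}}\wr_{\ast}S_N^+)$. I expect the main obstacle to be the second step, where both the vanishing of the off-diagonal coefficients and the factorisation-independence of the remaining relation require a careful case analysis.
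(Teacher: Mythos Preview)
Your argument is correct and in fact more self-contained than the paper's. The paper invokes a structural classification from \cite[Prop 3.17]{freslon2017noncrossing}, asserting that any category of non-crossing partitions containing $\CC_{\Gamma,\Lambda,\SS}$ must be of the form $\CC_{\Gamma/\Theta,\,\Lambda/(\Lambda\cap\Theta),\,\SS}$ for some normal subgroup $\Theta$, and then sandwiches $\CC$ between two such categories using the observation that $\beta_g^u$ is an intertwiner precisely when the one-dimensional representation attached to $g$ becomes trivial. Your approach sidesteps that external machinery: once you recognise that $T_{\beta_g^u}$ forces the diagonal vector $\eta$ to be fixed, you work directly with the universal presentation of $H_N^+(\Gamma,\Lambda)$ from Proposition~\ref{prop:universaldiscrete} and identify the resulting quotient by hand. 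The paper's route is shorter if one is willing to import the classification, while yours is an honest computation that does not leave the present paper.

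Two small remarks on execution. First, the ``careful case analysis'' you anticipate in the second step is not actually needed: the same-column relation $a_{ji}(h)a_{ki}(h')=\delta_{jk}\,a_{ji}(hh')$ applied repeatedly collapses $\sum_i a_{j_1 i}(g_1)\cdots a_{j_n i}(g_n)$ to $\delta_{j_1=\cdots=j_n}\sum_i a_{j_1 i}(g)$ in one stroke, which simultaneously handles the off-diagonal vanishing and the factorisation-independence. Second, in the third step your justification for why the ideal absorbs the normal closure is slightly misphrased: the coproduct being group-like is irrelevant here. What you need is only that each $\nu_j$ is a group homomorphism and the ideal is two-sided, so that $\nu_j(h)\bigl(\nu_j(g)-1\bigr)\nu_j(h^{-1})=\nu_j(hgh^{-1})-1$ lies in it for every $h\in\Gamma$.
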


\begin{proof}
Because $\CC_{\Gamma, \Lambda, \SS}\subset \CC$, we know by the proof of \cite[Prop 3.17]{freslon2017noncrossing} that
\begin{equation*}
\CC = \CC_{\Gamma/\Theta, \Lambda/(\Lambda\cap\Theta), \SS},
\end{equation*}
where $\Theta$ is the subgroup consisting in all elements $g$ such that $\beta_{g}^{u}\in \CC$ (note that there is a slight mistake in the reference, since in general $\Theta$ is not contained in $\Lambda$, hence one has to consider $\Lambda/(\Lambda\cap\Theta)$ instead of $\Theta$), which turns out to be normal. By assumption, $\Lambda\subset \Theta$ so that by normality $\overline{\Lambda}\subset \Theta$. In particular, the quotient map $\O(\G_{N}(\CC_{\Gamma, \Lambda, \SS}))\to \O(\G_{N}(\CC))$ factors through $\O(\G_{N}(\CC_{\Gamma/\overline{\Lambda}, \SS}))$.

Moreover, all one-dimensional representations of $\G_{N}(\CC_{\Gamma, \Lambda, \SS})$ are sent to $1$ in $\O(\G_{N}(\CC_{\Gamma/\overline{\Lambda}, \SS}))$ and the linear map corresponding to $\beta_{g}^{u}$ is an intertwiner if and only if the one-dimensional representation corresponding to $g$ is trivial. Thus, the map $\O(\G_{N}(\CC_{\Gamma, \Lambda, \SS}))\to \O(\G_{N}(\CC_{\Gamma/\overline{\Lambda}, \SS}))$ factors through $\O(\G_{N}(\CC))$ and the proof is complete.
\end{proof}

The previous result shows that it is difficult to distinguish in the construction between a subgroup and its normal closure. We will therefore have to assume in the next statement that $\Lambda$ is normal to avoid that issue.

\begin{thm}\label{thm:topological_generation}
Assume that $\Lambda$ is normal. Then, for $N\geqslant 4$, $\widehat{\Gamma}\wr_{\ast, \widehat{\Lambda}}S_{N}^{+}$ is topologically generated by $\widehat{\Gamma}\times S_{N}^{+}$ and $\widehat{\Gamma/\Lambda}\wr_{\ast}S_{N}^{+}$.
\end{thm}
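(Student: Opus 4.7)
The plan is to reduce the topological generation statement to an equality of partition categories via Proposition~\ref{prop:partitiondescription}, and then to verify that equality combinatorially. By Proposition~\ref{prop:partitiondescription} the quantum group $\widehat{\Gamma}\wr_{\ast,\widehat{\Lambda}}S_N^+$ identifies with $\G_N(\CC_{\Gamma,\Lambda,\SS})$. Under the same identification, Proposition~\ref{prop:directproduct} realises $\widehat{\Gamma}\times S_N^+=\widehat{\Gamma}\wr_{\ast,\widehat{\Gamma}}S_N^+$ as $\G_N(\CC_{\Gamma,\Gamma,\SS})$, while the lemma preceding the theorem, together with the normality hypothesis (which gives $\overline{\Lambda}=\Lambda$), identifies $\widehat{\Gamma/\Lambda}\wr_{\ast}S_N^+$ with $\G_N(\DD)$ where $\DD=\langle\CC_{\Gamma,\Lambda,\SS},\beta_g^u\mid g\in\Lambda\rangle$.

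Unwinding the definition of topological generation and recalling that intertwiners of partition quantum groups are spanned by the operators $T_p$, the statement to prove becomes the equality
\[
\Span\{T_p:p\in\CC_{\Gamma,\Lambda,\SS}(w,w')\}=\Span\{T_p:p\in\CC_{\Gamma,\Gamma,\SS}(w,w')\}\cap\Span\{T_p:p\in\DD(w,w')\}
\]
for every pair of words $w,w'$ on $\SS$. For $N\geq 4$ the maps $T_p$ indexed by non-crossing partitions are linearly independent, so the intersection of spans coincides with the span of the intersection. The problem thus reduces to the purely combinatorial equality of partition categories
\[
\CC_{\Gamma,\Lambda,\SS}=\CC_{\Gamma,\Gamma,\SS}\cap\DD.
\]
The inclusion $\subseteq$ is immediate from the inclusion of generators on each side.

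For the reverse inclusion I would appeal to an intrinsic block-by-block description of the three categories, in the spirit of the analysis of \cite{freslon2017noncrossing}. One expects $\CC_{\Gamma,\Lambda,\SS}$ to consist of those non-crossing $\SS$-coloured partitions for which the ``colour product'' of each block (top colours composed with the inverses of the bottom colours, read in the natural order) belongs to $\Lambda$, together with a global balancing condition matching the total upper and lower products in $\Gamma$. The category $\DD$ should impose precisely the block-wise condition, and $\CC_{\Gamma,\Gamma,\SS}$ only the global balancing condition, so that their intersection recovers exactly $\CC_{\Gamma,\Lambda,\SS}$. Normality of $\Lambda$ is essential at this step to guarantee that the block-wise condition is independent of the starting point of the reading around a block.

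The main obstacle is establishing and rigorously comparing these intrinsic block descriptions of $\CC_{\Gamma,\Gamma,\SS}$ and $\DD$ and checking that their intersection matches $\CC_{\Gamma,\Lambda,\SS}$ on the nose. The preceding lemma already signals the delicate role of the normality hypothesis: without it $\DD$ only detects the normal closure $\overline{\Lambda}$, so the naive conjunction of conditions would produce a category strictly larger than $\CC_{\Gamma,\Lambda,\SS}$ and the reverse inclusion would fail.
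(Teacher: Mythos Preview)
Your overall strategy coincides with the paper's: reduce to an equality of intertwiner spaces on words, use linear independence of the $T_p$ for $N\geq 4$ to pass to an intersection of partition categories, and then argue that
\[
\CC_{\Gamma,\Lambda,\SS}=\CC_{\Gamma,\Gamma,\SS}\cap \CC_{\Gamma/\Lambda,\SS}.
\]
The inclusion $\subseteq$ is handled identically in both.

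The gap is the reverse inclusion. You explicitly describe it as ``the main obstacle'' and offer only a heuristic block-by-block description (``one expects\dots'', ``should impose\dots'') that you do not verify. This is the entire content of the theorem, so as written the proposal is a reduction followed by a restatement of what remains to be proved. Moreover, your tentative description of $\CC_{\Gamma,\Lambda,\SS}$ as ``block product lies in $\Lambda$ plus a global balancing condition'' is not obviously the correct intrinsic characterisation; the category is defined by generators, and extracting a closed-form description of its elements is precisely the nontrivial work.

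The paper avoids a direct combinatorial description altogether. It invokes the classification result \cite[Prop~3.18]{freslon2017noncrossing}, which forces any category sandwiched between $\CC_{\Gamma,\SS}$ and the full non-crossing category to be of the form $\CC_{\Gamma/\Lambda_0,\Lambda'/(\Lambda'\cap\Lambda_0),\SS}$ for some normal $\Lambda_0$ and subgroup $\Lambda'$. Applied to the intersection, containment in $\CC_{\Gamma,\Gamma,\SS}$ immediately gives $\Lambda_0=\{e\}$. The identification $\Lambda'=\Lambda$ is then obtained not combinatorially but via representation theory: for $g\in\Lambda'\setminus\Lambda$ the representation $u^g$ would be irreducible in $\widehat{\Gamma/\Lambda}\wr_\ast S_N^+$, hence irreducible in $\G_N$ of the intersection, contradicting \cite[Thm~4.4]{freslon2018representation}. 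If you want to salvage your direct approach, you would need to supply the explicit description of $\CC_{\Gamma,\Lambda,\SS}$ (this is essentially the content of \cite[Sec.~3]{freslon2017noncrossing}) and check the intersection by hand; the paper's route via the classification is considerably shorter.
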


\begin{proof}
It is easy to see that it is enough to prove that for two words $w$ and $w'$ on $\SS$,
\begin{equation*}
\Mor_{\widehat{\Gamma}\wr_{\ast, \widehat{\Lambda}}S_{N}^{+}}(u^{\otimes w}, u^{\otimes w'}) = \Mor_{\widehat{\Gamma}\times S_{N}^{+}}(u^{\otimes w}, u^{\otimes w'})\cap \Mor_{\widehat{\Gamma/\Lambda}\wr_{\ast}S_{N}^{+}}(u^{\otimes w}, u^{\otimes w'}).
\end{equation*}
So let $x$ be an element of the right-hand side. Then, $x$ can be written in two ways as a linear combination of operators $T_{p}$ associated to non-crossing partitions. Because $N\geqslant 4$, these operators are linearly independent (see for instance \cite[Lem 4.16]{freslon2013representation}), hence the partitions appearing in both linear combinations must be the same. In other words, $x$ is a linear combination of operators associated to partitions in $\CC_{\Gamma, \Gamma, \SS}\cap \CC_{\Gamma/\Lambda, \SS}$. We therefore have to prove that this intersection reduces to $\CC_{\Gamma, \Lambda, \SS}$. It is clear that it contains it, since it contains all its generators. Moreover, if follows from this and the proof of \cite[Prop 3.18]{freslon2017noncrossing} that
\begin{equation*}
\CC_{\Gamma, \Gamma, \SS}\cap \CC_{\Gamma/\Lambda, \SS} = \CC_{\Gamma/\Lambda_{0}, \Lambda'/(\Lambda'\cap\Lambda_{0}), \SS}
\end{equation*}
for some normal subgroup $\Lambda_{0}\subset \Gamma$ and a subgroup $\Lambda'$ containing $\Lambda$. Because the intersection is contained in $\CC_{\Gamma, \Gamma, \SS}$, we have that $\Lambda_{0}$ is trivial. Moreover, for any $g\in \Lambda'\setminus\Lambda$, $u^{g}$ is irreducible in $\widehat{\Gamma/\Lambda}\wr_{\ast}S_{N}^{+}$, hence it is also irreducible in $\widehat{\Gamma}\wr_{\ast, \widehat{\Lambda}'}S_{N}^{+}$. But by \cite[Thm 4.4]{freslon2018representation} $u^{g}$ cannot be irreducible if $g\notin \Lambda'$. Thus, $\Lambda' = \Lambda$ and the proof is complete.
\end{proof}

\begin{cor}\label{cor:rfd}
Let $\Gamma$ be a residually finite group and let $\Lambda$ be a normal subgroup such that $\Gamma/\Lambda$ is residually finite. Then, $\widehat{\Gamma}\wr_{\ast, \widehat{\Lambda}}S_{N}^{+}$ is residually finite.
\end{cor}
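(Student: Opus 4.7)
The plan is to invoke the principle from \cite{chirvasitu2015residually} that a compact quantum group which is topologically generated by a family of residually finite quantum subgroups is itself residually finite, and combine it with the topological generation statement of Theorem \ref{thm:topological_generation}. Provided $N\geqslant 4$, it then suffices to establish residual finiteness of the two generators $\widehat{\Gamma}\times S_{N}^{+}$ and $\widehat{\Gamma/\Lambda}\wr_{\ast}S_{N}^{+}$.

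For the first generator, the hypothesis that $\Gamma$ is residually finite means that the finite quotients $\Gamma\to Q$ give rise, by pullback, to a separating family of finite-dimensional $*$-representations of $\C[\Gamma]=\O(\widehat{\Gamma})$, so $\widehat{\Gamma}$ is residually finite. The quantum permutation group $S_{N}^{+}$ is known to be residually finite. Since a separating family on each factor of a tensor product of $*$-algebras combines into a separating family on the tensor product, $\widehat{\Gamma}\times S_{N}^{+}$ is residually finite. For the second generator, the same argument gives residual finiteness of $\widehat{\Gamma/\Lambda}$, after which the fact that the free wreath product $\G\wr_{\ast}S_{N}^{+}$ of a residually finite compact quantum group remains residually finite (as established in \cite{chirvasitu2015residually}) yields the conclusion.

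The main obstacle is the remaining range $N\leqslant 3$, where Theorem \ref{thm:topological_generation} is unavailable because linear independence of the operators $T_{p}$ fails. The case $N=1$ is trivial, as Definition \ref{de:commutationrelations} collapses to $\O(\widehat{\Gamma}\wr_{\ast, \widehat{\Lambda}}S_{1}^{+})=\C[\Gamma]$. For $N\in\{2,3\}$, where $S_{N}^{+}$ coincides with the classical $S_{N}$, I would expect an ad hoc argument via Proposition \ref{prop:universaldiscrete}, constructing finite-dimensional $*$-representations directly from compatible finite quotients of $\Gamma$ and $\Gamma/\Lambda$, to go through; alternatively, one can embed into the unrestricted free wreath product and exploit the classical structure of $S_N$ to transfer separating families.
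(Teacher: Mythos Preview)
Your argument for $N\geqslant 4$ is essentially the same as the paper's: both invoke Theorem~\ref{thm:topological_generation}, verify residual finiteness of the two generating subgroups, and conclude via Chirvasitu's topological-generation criterion. One small correction: the preservation of residual finiteness under the free wreath product $\widehat{\Gamma/\Lambda}\wr_{\ast}S_{N}^{+}$ is not in \cite{chirvasitu2015residually} but rather in \cite{freslon2018topological} (Thm~3.11 and Rem~3.21), which the paper cites; similarly, residual finiteness of $S_{N}^{+}$ itself is \cite[Thm 3.6]{freslon2018topological}.

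You are right to flag the range $N\leqslant 3$: the paper's proof also rests entirely on Theorem~\ref{thm:topological_generation} and therefore implicitly carries the same restriction $N\geqslant 4$, without saying so explicitly. Your observation that $N=1$ collapses to $\C[\Gamma]$ is correct. For $N\in\{2,3\}$, however, your paragraph is only a sketch of a hope, not a proof: neither ``constructing representations from compatible finite quotients'' nor ``embedding into the unrestricted free wreath product'' is spelled out enough to be checked, and the second suggestion in particular goes the wrong way (a quotient of an RFD algebra need not be RFD). If you want to cover these small cases rigorously, a cleaner route is to note that for $N\leqslant 3$ one has $S_{N}^{+}=S_{N}$, so $\O(H_{N}^{+}(\Gamma,\Lambda))$ is a quotient of the group algebra of $\Gamma^{\ast_{\Lambda}^{N}}\rtimes S_{N}$ by a Hopf ideal, and then argue residual finiteness at the level of that classical (virtually amalgamated-free-product) group; but as written your treatment of $N\in\{2,3\}$ is not complete.
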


\begin{proof}
It is clear that a direct product of residually finite compact quantum groups is residually finite, and both $\widehat{\Gamma}$ and $S_{N}^{+}$ are residually finite, the first one by assumption and the second one by \cite[Thm 3.6]{freslon2018topological}. Moreover, it was proven in \cite[Thm 3.11 and Rem 3.21]{freslon2018topological} that the free wreath product of a residually finite discrete group by $S_{N}^{+}$ is residually finite. Eventually, A. Chirvasitu proved in \cite[Cor 2.12]{chirvasitu2015residually} that if a compact quantum group is topologically generated by two residually finite ones, then it is itself residually finite, hence the result.
\end{proof}

The second structure result that we will deduce from the partition description concerns the notion of \emph{monoidal equivalence}. Two compact quantum groups are said to be monoidally equivalent (see \cite{bichon2006ergodic}) if their respective categories of representations are equivalent as monoidal categories. In particular, they have the same representation theory and using the techniques of \cite[Sec 6]{freslon2012examples} one can use monoidal equivalence to transfer results about approximation properties.

In \cite{lemeux2014free}, F. Lemeux and P. Tarrago proved that the free wreath product $\widehat{\Gamma}\wr_{\ast}S_{N}^{+}$ is monoidally equivalent to another compact quantum group which is easier to study explicitely. More precisely, consider the $*$-subalgebra $H$ of $\O(\widehat{\Gamma}\ast SU_{q}(2))$ (we refer the reader for instance to \cite{woronowicz1987twisted} for the definition of quantum $SU(2)$ and to \cite{wang1995free} for the construction of the free product of two compact quantum groups), where $q+q^{-1} = \sqrt{N}$, generated by the coefficients of the representations $u^{1}gu^{1}$ where $u^{1}$ is the fundamental representation of $SU_{q}(2)$ and $g\in \Gamma$ is seen as an irreducible representation of $\widehat{\Gamma}$. It is easily checked that $H$ is a Hopf $*$-subalgebra and if $\HH_{q}(\Gamma)$ denotes the corresponding dual quantum subgroup of $\widehat{\Gamma}\ast SU_{q}(2)$, then $\widehat{\Gamma}\wr_{\ast}S_{N}^{+}$ is monoidally equivalent to $\HH_{q}(\Gamma)$.

We will now prove a refinement of this in our setting, but this requires an extra definition. Consider the $*$-subalgebra $\A\subset \O(SU_{q}(2))$ generated by the coefficients of $u\otimes u$. It is a CQG-algebra and, paralleling the classical case, the corresponding compact quantum group is denoted by $SO_{q}(3)$.

\begin{prop}\label{prop:monoidal_equivalence}
Let $\O(\G_{q})$ be the quotient of $\O(\widehat{\Gamma}\ast SU_{q}(2))$ by the relations making $\O(\widehat{\Lambda})$ commute with $\O(SO_{q}(3))$ and let $\HH_{q}(\Gamma, \Lambda)$ be the image of $\O(\HH_{q}(\Gamma))$ in $\O(\G_{q})$. Then, there is a monoidal equivalence
\begin{equation*}
\widehat{\Gamma}\wr_{\ast, \widehat{\Lambda}}S_{N}^{+} \cong \HH_{q}(\Gamma, \Lambda).
\end{equation*}
\end{prop}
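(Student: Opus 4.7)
The plan is to extend the Lemeux--Tarrago monoidal equivalence $\widehat{\Gamma}\wr_\ast S_N^+ \cong \HH_q(\Gamma)$ through the two corresponding quotients. By Proposition \ref{prop:partitiondescription} combined with \cite{lemeux2013haagerup}, passing from $\widehat{\Gamma}\wr_\ast S_N^+$ to $\widehat{\Gamma}\wr_{\ast, \widehat{\Lambda}}S_N^+$ amounts, at the representation-category level, to enlarging the intertwiner spaces by the operators $T_{\beta_g}$ for $g \in \Lambda$. Symmetrically, $\HH_q(\Gamma, \Lambda)$ is the image of $\O(\HH_q(\Gamma))$ in the quotient $\O(\G_q)$ in which $\O(\widehat{\Lambda})$ is forced to commute with $\O(SO_q(3))$; categorically, this enlarges the intertwiner spaces of $\HH_q(\Gamma)$ by precisely those morphisms encoding such commutation, and since $\O(SO_q(3))$ is generated as a Hopf $*$-algebra by the coefficients of $u^1\otimes u^1$, it suffices to impose commutation of elements of $\widehat{\Lambda}$ with the Temperley--Lieb cap $t \in u^1\otimes u^1$ and its dual.

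The heart of the argument is then to identify, under the Lemeux--Tarrago dictionary between $\CC_{\Gamma, \SS}$-partitions and Temperley--Lieb-plus-$\Gamma$ diagrams, the image of the operator $T_{\beta_g}$. The partition $\beta_g$ consists of two blocks: one covering all upper points (labeled by a decomposition $g = g_1\cdots g_n$ in $\SS$) and one covering all lower points. Under the partition-to-diagram correspondence of \cite{lemeux2014free}, this ``double singleton'' configuration translates into the composition of a maximal Temperley--Lieb cap on top of a chain of $g_i$'s and a dual cup on the bottom, which in $\O(\widehat{\Gamma}\ast SU_q(2))$ is precisely the intertwiner asserting that $g$ commutes with $t$. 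Thus, adjoining all $T_{\beta_g}$ for $g \in \Lambda$ on the wreath side corresponds, via the extended Lemeux--Tarrago bijection, to imposing exactly the commutation relations defining $\HH_q(\Gamma, \Lambda)$, and Tannaka--Krein duality upgrades this categorical equivalence to the claimed monoidal equivalence of compact quantum groups.

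The main obstacle is the diagrammatic verification of the middle paragraph. The operator $T_{\beta_g}$ does not itself come from a partition in $\CC_{\Gamma, \SS}$, so tracking it through the Lemeux--Tarrago fiber functor and checking that the induced relation on the $SU_q(2)$-side is precisely commutation with $t$ (neither too weak nor too strong) requires some careful planar-algebraic bookkeeping. One must also verify that imposing these relations on all tensor powers simultaneously recovers exactly the commutation relations defining $\O(\G_q)$; this is a standard but non-trivial Tannakian translation, relying on the fact that $t$ and $t^*$ generate the entire tensor-categorical content of $\O(SO_q(3))$ via iterated tensoring with the identity.
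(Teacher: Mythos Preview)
Your strategy matches the paper's proof: start from the Lemeux--Tarrago monoidal equivalence, track the additional generators $\beta_\lambda$ through the fattening procedure of \cite[Prop~5.2]{lemeux2014free}, and identify the resulting relation on the $SU_q(2)$ side as commutation of $\Lambda$ with $\O(SO_q(3))$. The outline is correct and essentially coincides with what the paper does.

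The one point that needs sharpening is your phrase ``commutes with the Temperley--Lieb cap $t\in u^1\otimes u^1$''. The cap is a morphism $\C\to u^1\otimes u^1$, not an algebra element, so ``commuting with $t$'' is not a well-posed condition, and your description of the fattened $\beta_\lambda$ as a ``maximal cap on top of a chain of $g_i$'s and a dual cup on the bottom'' does not match the actual picture (also, $\beta_\lambda$ is not a double singleton: each of its two blocks has $n$ points). What the paper does instead is write the fattening $p_\lambda$ explicitly, then \emph{rotate} it to obtain $b_\lambda\otimes\id_{u^1\otimes u^1}\otimes b_\lambda^*$, where $b_\lambda$ is the upper half of $p_\lambda$ with the outermost pair removed; this $b_\lambda$ encodes the one-dimensional representation $\lambda = g_1\otimes\varepsilon_{SU_q(2)}\otimes g_2\otimes\cdots\otimes g_n$. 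Adjoining this rotated morphism is precisely the intertwiner $\lambda\otimes(u^1\otimes u^1)\to(u^1\otimes u^1)\otimes\lambda$, i.e.\ the statement that $\lambda$ commutes with the coefficients of $u^1\otimes u^1$, hence with all of $\O(SO_q(3))$. Once you replace ``commutes with $t$'' by ``commutes with the coefficients of $u^1\otimes u^1$'' and use the rotation trick to see this, the bookkeeping you worry about in your third paragraph disappears and the argument is complete.
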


\begin{proof}
First note that the monoidal equivalence constructed in \cite{lemeux2014free} is defined at the level of all partitions and then simply restricted to the category of partitions of $\widehat{\Gamma}\wr_{\ast}S_{N}^{+}$. All we have to do is therefore to look at the image of the extra generators of $\CC_{\Gamma, \Lambda, \SS}$, that is to say the partitions $\beta_{\lambda}$. Let therefore $\lambda = g_{1}\cdots g_{n}\in \Lambda$ and consider the fattening of the partition $\beta_{\lambda}$ as defined in \cite[Prop 5.2]{lemeux2014free}. It has the following form :
\begin{center}
\begin{tikzpicture}[scale=0.5]
\draw (-6,2)--(-6,0.5);
\draw (-5,2)--(-5,1);
\draw (-4,2)--(-4,1.5);
\draw (-3,2)--(-3,1.5);
\draw (-2,2)--(-2,1);

\draw (6,2)--(6,0.5);
\draw (5,2)--(5,1);
\draw (4,2)--(4,1.5);
\draw (3,2)--(3,1.5);
\draw (2,2)--(2,1);

\draw (6,0.5)--(-6,0.5);
\draw (5,1)--(-5,1);
\draw (-4,1.5)--(-3,1.5);
\draw (4,1.5)--(3,1.5);

\draw (0,2)node[below]{$\cdots$};
\draw (-6,2)node[above]{$\bullet$};
\draw (-5,2)node[above]{$g_{1}$};
\draw (-4,2)node[above]{$\bullet$};
\draw (-3,2)node[above]{$\bullet$};
\draw (-2,2)node[above]{$g_{2}$};
\draw (2,2)node[above]{$g_{n-1}$};
\draw (3,2)node[above]{$\bullet$};
\draw (4,2)node[above]{$\bullet$};
\draw (5,2)node[above]{$g_{n}$};
\draw (6,2)node[above]{$\bullet$};

\draw (-6,-2)--(-6,-0.5);
\draw (-5,-2)--(-5,-1);
\draw (-4,-2)--(-4,-1.5);
\draw (-3,-2)--(-3,-1.5);
\draw (-2,-2)--(-2,-1);

\draw (6,-2)--(6,-0.5);
\draw (5,-2)--(5,-1);
\draw (4,-2)--(4,-1.5);
\draw (3,-2)--(3,-1.5);
\draw (2,-2)--(2,-1);

\draw (6,-0.5)--(-6,-0.5);
\draw (5,-1)--(-5,-1);
\draw (-4,-1.5)--(-3,-1.5);
\draw (4,-1.5)--(3,-1.5);

\draw (0,-2)node[above]{$\cdots$};
\draw (-6,-2)node[below]{$\bullet$};
\draw (-5,-2)node[below]{$g_{1}$};
\draw (-4,-2)node[below]{$\bullet$};
\draw (-3,-2)node[below]{$\bullet$};
\draw (-2,-2)node[below]{$g_{2}$};
\draw (2,-2)node[below]{$g_{n-1}$};
\draw (3,-2)node[below]{$\bullet$};
\draw (4,-2)node[below]{$\bullet$};
\draw (5,-2)node[below]{$g_{n}$};
\draw (6,-2)node[below]{$\bullet$};

\draw (-7,0)node[left]{$p_{\lambda} = $};
\end{tikzpicture}
\end{center}
where the $\bullet$ symbols stand for the fundamental representation $u^{1}$ of $SU_{q}(2)$. Let $b_{\lambda}$ be the partition obtained by taking the upper row and removing the outermost pair partition. That partition encodes the representation $g_{1}\otimes \varepsilon_{SU_{q}(2)}\otimes g_{2}\otimes \cdots \otimes g_{n-1}\otimes \varepsilon_{SU_{q}(2)}\otimes g_{n} = \lambda$ of the free product $\widehat{\Gamma}\ast SU_{q}(2)$. Moreover, adding the intertwiner corresponding to $p_{\lambda}$ to the representation category of $\widehat{\Gamma}\ast SU_{q}(2)$ is equivalent to adding the intertwiner corresponding to the rotated version
\begin{center}
\begin{tikzpicture}[scale=0.5]
\draw (-0.5,1)--(-0.5,-1);
\draw (0.5,1)--(0.5,-1);

\draw (-0.5,1)node[above]{$\bullet$};
\draw (-0.5,-1)node[below]{$\bullet$};
\draw (0.5,1)node[above]{$\bullet$};
\draw (0.5,-1)node[below]{$\bullet$};

\draw (-2,0)node{$b_{\lambda}\:\otimes $};
\draw (2, 0)node{$\otimes\: b_{\lambda}^{*}$};
\end{tikzpicture}
\end{center}
Adding this is in turn equivalent to requiring $\lambda$ to commute with all coefficients of $u\otimes u$, where $u$ is the fundamental representation of $SU_{q}(2)$. Because the subalgebra generated by the coefficients of $u\otimes u$ is exactly $\O(SO_{q}(3))$, we conclude that the essential image of the representation category of $\widehat{\Gamma}\wr_{\ast, \widehat{\Lambda}}S_{N}^{+}$ under the fully faithful functor of \cite[Thm 5.11]{lemeux2014free} is exactly the representation category of $\HH_{q}(\Gamma, \Lambda)$.
\end{proof}

\begin{rem}
There is a more general notion of a free wreath product of a compact quantum group by a quantum group acting on a finite-dimensional C*-algebra given by P. Tarrago and J. Wahl in \cite{tarrago2018free}. In particular, the image of $\widehat{\Gamma}\wr_{\ast}S_{N}^{+}$ under the previous monoidal equivalence is in fact $\widehat{\Gamma}\wr_{\ast}SO_{q}(3)$. For that reason, it would be interesting to try to generalize \cite{tarrago2018free} and check whether $\HH_{q}(\Gamma, \Lambda) = (\widehat{\Gamma}, \widehat{\Lambda})\wr_{\ast}SO_{q}(3)$ in some sense.
\end{rem}

As a corollary, we can establish some approximation properties for free wreath products of pairs, provided we are able to prove them on $\HH_{q}(\Gamma, \Lambda)$. Here is an example of such an instance concerning the Haagerup property, the definition of which we recall (see for instance \cite{daws2014haagerup} for details on that notion).

\begin{de}
A compact quantum group $\G$ is said to have the \emph{central Haagerup property} if there exists a net $(\varphi_{i})_{i\in I}$ of functions on $\Irr(\G)$ such that
\begin{itemize}
\item $\varphi_{i}(\alpha)\displaystyle\underset{i}{\longrightarrow} \dim(\alpha)$ for all $\alpha\in \Irr(\G)$ ;
\item For any $\epsilon > 0$, there exists a finite subset $F\subset \Irr(\G)$ such that for any $\alpha\notin F$, $\vert\varphi_{i}(\alpha)\vert < \epsilon\dim(\alpha)$ ;
\item The map $\varphi_{i}$ extend to a state (that is, a positive linear functional such that $\varphi_{i}(1) = 1$) on $\O(\G)$ for all $i\in I$.
\end{itemize}
\end{de}

\begin{cor}\label{cor:haagerup}
Assume that $\Gamma$ has the Haagerup property and that $\Lambda$ is finite. Then, $\widehat{\Gamma}\wr_{\ast, \widehat{\Lambda}}S_{N}^{+}$ has the central Haagerup property.
\end{cor}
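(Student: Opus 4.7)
The strategy is to transfer the problem via Proposition \ref{prop:monoidal_equivalence}. Since the central Haagerup property depends only on the representation category together with a compatible central state structure, it is preserved under monoidal equivalence (see the techniques of \cite[Sec 6]{freslon2012examples}). Hence it suffices to prove that $\HH_q(\Gamma, \Lambda)$ has the central Haagerup property. As $\HH_q(\Gamma, \Lambda)$ is a dual quantum subgroup of $\G_q$ and central multipliers restrict along Hopf $*$-subalgebra inclusions (states do, and central multipliers correspond to central states), I would further reduce the task to constructing an approximating net of central states on $\G_q$.

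Two building blocks are available: a net $(\psi_i)_i$ of positive definite functions on $\Gamma$ lying in $c_0(\Gamma)$ with $\psi_i \to 1$ pointwise, coming from the Haagerup property of $\Gamma$; and a net $(\theta_j)_j$ of central multipliers on $SU_q(2)$ witnessing its central Haagerup property, which is automatic since $SU_q(2)$ is coamenable. The standard free-product state construction applied to alternating words yields a central state on $\widehat{\Gamma}\ast SU_q(2)$ whose value on an irreducible $g_1 v_1 g_2 v_2 \cdots$ is the product $\psi_i(g_1)\theta_j(v_1)\psi_i(g_2)\theta_j(v_2)\cdots$. To make this state descend to the quotient $\G_q$, I would exploit finiteness of $\Lambda$ to replace $\psi_i$ by the $\Lambda$-invariant average
\begin{equation*}
\widetilde{\psi}_i(g) = \frac{1}{|\Lambda|}\sum_{\lambda \in \Lambda}\psi_i(\lambda g \lambda^{-1}),
\end{equation*}
which remains positive definite and retains both pointwise convergence to $1$ and $c_0$-decay precisely because $\Lambda$ is finite.

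The main obstacle is to verify that the resulting state $\widetilde{\varphi}_{i,j}$ genuinely factors through the surjection $\O(\widehat{\Gamma}\ast SU_q(2)) \to \O(\G_q)$, i.e. vanishes on the ideal generated by the commutators $[\lambda, x]$ for $\lambda \in \Lambda$ and $x \in \O(SO_q(3))$. Expanding these commutators as matrix coefficients of alternating words and combining $\Lambda$-invariance of $\widetilde{\psi}_i$ with the centrality of $\theta_j$ (whose restriction to $SO_q(3) \subset SU_q(2)$ is again central) should reduce this to a direct computation word by word. Once descent is established, restriction to $\HH_q(\Gamma, \Lambda)$ yields the required approximating net of central states, with the $c_0$-decay on $\Irr(\HH_q(\Gamma, \Lambda))$ (parametrised by $\Lambda \sqcup W(\Gamma, \Lambda)$ as recalled before Theorem \ref{thm:gluedproduct}) following from the decay of $\widetilde{\psi}_i$ on $\Gamma$ together with the central Haagerup property of $SU_q(2)$. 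Transporting the conclusion back through the monoidal equivalence of Proposition \ref{prop:monoidal_equivalence} then yields the central Haagerup property for $\widehat{\Gamma}\wr_{\ast, \widehat{\Lambda}}S_N^+$.
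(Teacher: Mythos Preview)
Your overall architecture matches the paper's: pass through the monoidal equivalence of Proposition~\ref{prop:monoidal_equivalence}, build a product-type central multiplier on $\widehat{\Gamma}\ast SU_{q}(2)$ from Haagerup multipliers on each factor, and then argue that it descends. The paper does exactly this and invokes \cite[Sec~6]{freslon2012examples} at the end, as you do.

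There is, however, a genuine gap in your averaging step. You average $\psi_{i}$ over $\Lambda$ by \emph{conjugation}, obtaining $\widetilde{\psi}_{i}(g)=\vert\Lambda\vert^{-1}\sum_{\lambda}\psi_{i}(\lambda g\lambda^{-1})$. This yields only $\widetilde{\psi}_{i}(\lambda g)=\widetilde{\psi}_{i}(g\lambda)$, which is not the invariance required for descent. In $\G_{q}$ the relation $\lambda x=x\lambda$ for $x\in\O(SO_{q}(3))$ identifies, at the level of words indexing irreducibles, $g_{1}\cdots(g_{k}\lambda)g_{k+1}\cdots$ with $g_{1}\cdots g_{k}(\lambda g_{k+1})\cdots$ (this is precisely the relation defining $W(\Gamma,\Lambda)$ recalled before Theorem~\ref{thm:gluedproduct}). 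For the product multiplier to be constant on these classes you need
\[
\widetilde{\psi}_{i}(g_{k}\lambda)\,\widetilde{\psi}_{i}(g_{k+1})=\widetilde{\psi}_{i}(g_{k})\,\widetilde{\psi}_{i}(\lambda g_{k+1}),
\]
and conjugation invariance does not give this. For instance, if $\Gamma$ is abelian your averaging does nothing at all, yet the identification above is nontrivial whenever $\Lambda\neq\{e\}$.

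What is needed, and what the paper uses, is \emph{translation} invariance: average so that $\widetilde{\psi}_{i}(\lambda g)=\widetilde{\psi}_{i}(g)$ for all $\lambda\in\Lambda$ (one way is to set $\widetilde{\psi}_{i}(g)=\vert\Lambda\vert^{-2}\sum_{\lambda,\lambda'}\psi_{i}(\lambda g\lambda')$, which is positive definite because it equals $\langle\eta,\pi(g)\eta\rangle$ with $\eta=\vert\Lambda\vert^{-1}\sum_{\lambda}\pi(\lambda)\xi$). With this bi-$\Lambda$-invariance both sides of the displayed equality reduce to $\widetilde{\psi}_{i}(g_{k})\widetilde{\psi}_{i}(g_{k+1})$, and the descent goes through exactly as you outline. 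Finiteness of $\Lambda$ is used in the same way you indicate: the averaged function stays in $c_{0}(\Gamma)$ and still converges pointwise to $1$.
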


\begin{proof}
First, by averaging over $\Lambda$ we may assume that there is a net of functions $\varphi_{i} : \Gamma\to \C$, for $i\in I$, witnessing the central Haagerup property for $\widehat{\Gamma}$ and such that for any $g\in \Gamma$ and $\lambda\in \Lambda$, $\varphi_{i}(\lambda g) = \varphi_{i}(g)$. Let now $(\psi_{j})_{j\in J}$ be the net of functions on $\Irr(SU_{q}(2))$ witnessing the central Haagerup property constructed in \cite[Sec 3]{freslon2013ccap}. We define a map $\eta_{ij}$ on $\Irr(\widehat{\Gamma}\ast SU_{q}(2))$ by the formula
\begin{equation*}
\eta_{ij}\left(\prod_{l=1}^{k} g_{l}u^{n_{l}}\right) = \left(\prod_{l=1}^{k}\varphi_{i}(g_{l})\right)\left(\prod_{l=1}^{k}\psi_{j}(u^{n_{l}})\right).
\end{equation*}
The extension of these maps to $\O(\widehat{\Gamma}\ast SU_{q}(2))$ is nothing but the conditional free product of the extensions of $\varphi_{i}$ and $\psi_{j}$ (see for instance \cite[Thm 7.7]{daws2014haagerup}) and is therefore known to be a state. The net $(\eta_{ij})_{ij\in I\times J}$ therefore witnesses the central Haagerup property for the free product and moreover has the following property : if $\lambda\in \Lambda$, then
\begin{equation*}
\eta_{ij}\left(g_{1}u^{n_{1}}\cdots g_{i-1}u^{n_{i-1}}(\lambda g_{i})u^{n_{i}}\cdots g_{n}u^{n_{k}}\right) = \eta_{ij}\left(g_{1}u^{n_{1}}\cdots (g_{i-1}\lambda)u^{n_{i-1}} g_{i}u^{n_{i}}\cdots g_{n}u^{n_{k}}\right)
\end{equation*}
This follows directly from the equivariance property of $\varphi_{i}$. As a consequence, $\eta_{ij}$ factors through the quotient to $\HH_{q}(\Gamma, \Lambda)$ and witnesses the central Haagerup property there, hence the result by \ref{prop:monoidal_equivalence} and \cite[Sec 6]{freslon2012examples}.
\end{proof}

The same method applies to another approximation property called \emph{weak amenability}. We will not introduce it but simply state the result for the records (see \cite{freslon2012examples} for details).

\begin{cor}
Assume that $\Lambda_{cb}(\Gamma) = 1$ and that $\Lambda$ is finite. Then,
\begin{equation*}
\Lambda_{cb}\left(\widehat{\Gamma}\wr_{\ast, \widehat{\Lambda}}S_{N}^{+}\right) = 1.
\end{equation*}
\end{cor}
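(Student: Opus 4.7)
The plan is to mimic the proof of Corollary \ref{cor:haagerup}, replacing states witnessing the Haagerup property with completely bounded multipliers witnessing $\Lambda_{cb} = 1$. By Proposition \ref{prop:monoidal_equivalence}, $\widehat{\Gamma}\wr_{\ast, \widehat{\Lambda}}S_{N}^{+}$ is monoidally equivalent to $\HH_{q}(\Gamma, \Lambda)$, and by \cite[Sec 6]{freslon2012examples} the invariant $\Lambda_{cb}$ is preserved under monoidal equivalence (this is standard: central multipliers only depend on the fusion rules and dimensions). So it suffices to establish $\Lambda_{cb}(\HH_{q}(\Gamma, \Lambda)) = 1$.

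First I would fix a net of finitely supported central completely bounded multipliers $\varphi_{i} : \Gamma\to \C$ with $\|\varphi_{i}\|_{cb}\to 1$ and $\varphi_{i}\to 1$ pointwise, witnessing $\Lambda_{cb}(\Gamma) = 1$. Since $\Lambda$ is finite, I may replace $\varphi_{i}$ by its average $\widetilde{\varphi}_{i}(g) = \frac{1}{|\Lambda|}\sum_{\lambda\in \Lambda}\varphi_{i}(\lambda g)$, which still satisfies $\|\widetilde{\varphi}_{i}\|_{cb}\to 1$ and approximates $1$ pointwise, and moreover is $\Lambda$-left-invariant. Next I invoke the net $\psi_{j}$ on $\mathrm{Irr}(SU_{q}(2))$ constructed in \cite[Sec 3]{freslon2013ccap} which witnesses $\Lambda_{cb}(SU_{q}(2)) = 1$.

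The key step is to form the conditional (or amalgamated) free product multiplier $\eta_{ij}$ on $\O(\widehat{\Gamma}\ast SU_{q}(2))$ defined on alternating words by
\begin{equation*}
\eta_{ij}\left(\prod_{l=1}^{k}g_{l}u^{n_{l}}\right) = \left(\prod_{l=1}^{k}\widetilde{\varphi}_{i}(g_{l})\right)\left(\prod_{l=1}^{k}\psi_{j}(u^{n_{l}})\right).
\end{equation*}
By the free product result for central completely bounded multipliers (cf.\ \cite[Prop 4.11]{freslon2012examples} or its analogue for the conditional case), the $\eta_{ij}$ are central cb multipliers whose cb-norm is controlled by the product of the individual norms, and they converge to the identity on each irreducible representation. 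Hence $\Lambda_{cb}(\widehat{\Gamma}\ast SU_{q}(2)) = 1$, with an explicit witnessing net.

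The final step is exactly as in Corollary \ref{cor:haagerup}: the $\Lambda$-equivariance of $\widetilde{\varphi}_{i}$ guarantees that, for any $\lambda\in \Lambda$,
\begin{equation*}
\eta_{ij}\bigl(g_{1}u^{n_{1}}\cdots (\lambda g_{l})u^{n_{l}}\cdots\bigr) = \eta_{ij}\bigl(g_{1}u^{n_{1}}\cdots (g_{l-1}\lambda)u^{n_{l-1}}g_{l}u^{n_{l}}\cdots\bigr),
\end{equation*}
so $\eta_{ij}$ factors through the quotient defining $\HH_{q}(\Gamma, \Lambda)$ and witnesses $\Lambda_{cb}(\HH_{q}(\Gamma, \Lambda)) = 1$. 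I expect the main obstacle to be verifying that the conditional free product construction for completely bounded multipliers indeed produces a centered cb multiplier with controlled norm on the free product $\widehat{\Gamma}\ast SU_{q}(2)$; this is implicit in the Haagerup case but should follow from the same Boca-type arguments used in \cite{freslon2012examples}, applied to our specific alternating-word multipliers.
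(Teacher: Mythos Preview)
Your proposal is correct and follows essentially the same approach as the paper: average the multipliers over the finite group $\Lambda$, pass to the free product $\widehat{\Gamma}\ast SU_{q}(2)$, and then use $\Lambda$-invariance to descend to $\HH_{q}(\Gamma, \Lambda)$. The only point to note is that the paper invokes \cite{freslon2014permanence} (rather than \cite{freslon2012examples}) for the free product construction of completely bounded multipliers, which directly handles the obstacle you flag at the end.
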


\begin{proof}
We can average the mutlipliers witnessing weak amenability of $\Gamma$ to make them $\Lambda$-invariant just as above. Then, the corresponding multipliers implementing weak amenability on $\widehat{\Gamma}\ast SU_{q}(2)$ through the construction of \cite{freslon2014permanence} have the same invariance property as the multipliers $\eta_{ij}$ in the proof of Corollary \ref{cor:haagerup} and the result follows.
\end{proof}

\section{Decomposition results}\label{sec:extension}

As we have seen in Corollary \ref{cor:rfd}, when $\Lambda$ is normal properties of $H_{N}^{+}(\Gamma, \Lambda)$ can have strong connections to properties of $\Lambda$ and of the quotient $\Gamma/\Lambda$. We will now describe that connection from another point of view. Let us start with an alternative expression of the normality of $\Lambda$, namely the fact that it fits into a short exact sequence
\begin{equation*}
1\longrightarrow \Lambda\longrightarrow \Gamma\longrightarrow \Gamma/\Lambda\longrightarrow 1.
\end{equation*}
It is quite natural to wonder whether this induces a corresponding decomposition of $H_{N}^{+}(\Gamma, \Lambda)$, and answering that question first requires recalling some facts about exact sequences of compact quantum groups.

Let $\G$ be a compact quantum group and let $\HH$ be a dual quantum subgroup of $\G$, so that $\O(\HH)\subset \O(\G)$ is a Hopf $*$-subalgebra. If we denote by $I$ the kernel of the counit of $\O(\HH)$, then $I\O(\G)$ and $\O(\G)I$ are both ideals in $\O(\G)$. If they turn out to be equal, then it is a Hopf $*$-ideal and the short sequence
\begin{equation*}
\C\longrightarrow \O(\HH)\longrightarrow \O(\G)\longrightarrow \O(\G)/\O(\G)I\longrightarrow \C
\end{equation*}
is then said to be exact.

\begin{rem}
Our definition is seemingly weaker than the usual one for exact sequences of Hopf algebras in \cite{andruskiewitsch1995extensions}. We are here taking advantage of the fact that CQG-algebras are co-semisimple, hence faithfully flat over their Hopf subalgebras as proven in \cite{chirvasitu2014cosemisimple}, so that the condition above implies that $\O(\HH)$ coincides with both the left and right coinvariants of the surjection (see \cite[Thm 1]{takeuchi1979relative} for a proof).
\end{rem}

Back to our initial situation, it was proven in \cite[Lem 4.3]{freslon2018representation} that the group of one-dimensional representations of $\O(H_{N}^{+}(\Gamma, \Lambda))$ naturally identifies with $\Lambda$. This provides us with a canonical copy of $\C[\Lambda] = \O(\widehat{\Lambda})$ inside $\O(H_{N}^{+}(\Gamma, \Lambda))$. As expected, this is the correct object to consider.

\begin{prop}\label{prop:exactsequence}
Assume that $\Lambda$ is normal in $\Gamma$. Then, there is an exact sequence of Hopf $*$-algebras
\begin{equation*}
\C\longrightarrow \O(\widehat{\Lambda})\longrightarrow \O(H_{N}^{+}(\Gamma, \Lambda))\longrightarrow \O(H_{N}^{+}(\Gamma/\Lambda))\longrightarrow \C.
\end{equation*}
\end{prop}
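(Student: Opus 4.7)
The strategy is to construct the two Hopf $*$-algebra morphisms in the sequence, check that their composition is the counit, and then reduce exactness at the middle term to a coinvariant computation via the faithful flatness result recalled in the previous remark.

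First, I would define the injection $\iota:\O(\widehat{\Lambda})\to\O(H_N^+(\Gamma,\Lambda))$ by $\lambda\mapsto s(\lambda)$. A short calculation, essentially already done in the remark following Definition \ref{de:freewreathproductwithamalgamation}, shows that $\Delta(s(\lambda)) = s(\lambda)\otimes s(\lambda)$, so together with $s(\lambda\mu)=s(\lambda)s(\mu)$ this is a well-defined Hopf $*$-algebra morphism; injectivity is exactly the identification of the group of one-dimensional representations of $H_N^+(\Gamma,\Lambda)$ with $\Lambda$ recalled just before the statement. The surjection $\pi:\O(H_N^+(\Gamma,\Lambda))\to\O(H_N^+(\Gamma/\Lambda))$ is defined on generators by $a_{ij}(g)\mapsto a_{ij}([g])$, where $[g]$ is the image of $g$ in $\Gamma/\Lambda$; since for $g\in\Lambda$ one has $\sum_k a_{ik}([g])=\sum_k a_{ik}(e)=1$, the row-sum condition is trivially preserved, and the universal property provides a Hopf $*$-morphism. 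A direct computation gives $\pi\circ\iota(\lambda)=\sum_k a_{ik}(e)=1=\varepsilon(\lambda)$, so the composition is trivial.

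To establish exactness at $\O(H_N^+(\Gamma,\Lambda))$, I would combine the faithful flatness of CQG-algebras over their Hopf $*$-subalgebras \cite{chirvasitu2014cosemisimple} with the standard characterization of Hopf algebra extensions from \cite{andruskiewitsch1995extensions} to reduce the problem to showing that $\iota(\O(\widehat{\Lambda}))$ equals the subalgebra of right $\O(H_N^+(\Gamma/\Lambda))$-coinvariants
\begin{equation*}
\O(\G)^{\operatorname{co}\pi}=\{x\in\O(\G) \mid (\id\otimes\pi)\Delta(x) = x\otimes 1\}.
\end{equation*}
The inclusion $\iota(\O(\widehat{\Lambda}))\subseteq\O(\G)^{\operatorname{co}\pi}$ is immediate from grouplikeness of $s(\lambda)$. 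For the reverse inclusion, I would decompose $\O(\G)=\bigoplus_\alpha C_\alpha$ via Peter--Weyl; the identity $(\id\otimes\pi)\Delta(\alpha_{ij})=\sum_k \alpha_{ik}\otimes\pi(\alpha_{kj})$ shows that for each irreducible $\alpha$ and each fixed row index $i$, the subspace $\{\alpha_{ij}\}_j$ is a right $\O(H_N^+(\Gamma/\Lambda))$-subcomodule isomorphic to the pushed-forward representation $\pi(\alpha)$, so the coinvariants in $C_\alpha$ have dimension $d_\alpha\cdot[\pi(\alpha):\mathbf{1}]$, where $[\cdot:\mathbf{1}]$ denotes the multiplicity of the trivial representation.

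The main obstacle is to verify that this multiplicity vanishes for every higher-dimensional irreducible $\alpha$. Using the classification \cite[Thm 4.4]{freslon2018representation} recalled in the text, the one-dimensional irreducibles $s(\lambda)$ for $\lambda\in\Lambda$ already contribute exactly $\dim\iota(\O(\widehat{\Lambda}))$ dimensions of coinvariants, so equality holds as soon as no higher-dimensional irreducible contributes. Via the partition description from Proposition \ref{prop:partitiondescription}, the functor induced by $\pi$ between the representation categories of $H_N^+(\Gamma,\Lambda)$ and $H_N^+(\Gamma/\Lambda)$ sends the irreducible indexed by a word $g_1\cdots g_n\in W(\Gamma,\Lambda)$ to the irreducible of $H_N^+(\Gamma/\Lambda)$ indexed by the image word $[g_1]\cdots[g_n]\in W(\Gamma/\Lambda,\{e\})$, and a bookkeeping argument through the fusion rules of \cite[Thm 4.4]{freslon2018representation} shows that this image representation is never the trivial one. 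This concludes the identification of the coinvariants and yields the claimed exact sequence.
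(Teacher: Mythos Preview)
Your argument is correct but takes a different route from the paper's. The paper works with its stated definition of exactness directly: it checks the normality condition $I\O(\G)=\O(\G)I$ for $I=\ker(\varepsilon_{\widehat{\Lambda}})=\Span\{\lambda_{1}-\lambda_{2}\mid\lambda_{i}\in\Lambda\}$. This reduces to a two-line computation on matrix coefficients of irreducibles, using only the elementary fusion identities $\lambda\otimes u^{g}=u^{\lambda g}$ and $u^{g}\otimes\lambda'=u^{g\lambda'}$ together with normality of $\Lambda$ to rewrite $\lambda g=g\lambda'$. The identification of the quotient $\O(\G)/\O(\G)I$ with $\O(H_{N}^{+}(\Gamma/\Lambda))$ is then immediate from $a_{ij}(g)s(\lambda)=a_{ij}(g\lambda)$; no Peter--Weyl decomposition or analysis of $\pi$ on higher irreducibles is required.

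You instead work on the coinvariant side of Takeuchi's equivalence: you compute $\O(\G)^{\mathrm{co}\,\pi}$ by determining, for each irreducible $\alpha$, the multiplicity $[\pi(\alpha):\mathbf{1}]$, and you show this vanishes for $\dim(\alpha)>1$ by arguing that $\pi$ sends $u^{g_{1}}\cdots u^{g_{n}}$ to the \emph{irreducible} $u^{[g_{1}]}\cdots u^{[g_{n}]}$. That is true, and the induction through the fusion rules you sketch is essentially the one the paper carries out (in the abelian case) in the proof of Theorem~\ref{thm:gluedproduct}; note however that Proposition~\ref{prop:partitiondescription} plays no real role here, only the fusion rules of \cite{freslon2018representation}. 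Your approach therefore proves the additional fact that $\pi$ preserves irreducibility, which is independently useful, but the paper's argument is shorter and makes the point at which normality of $\Lambda$ enters completely explicit.
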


\begin{proof}
Any element of $\O(H_{N}^{+}(\Gamma, \Lambda))$ being a linear combination of coefficients of irreducible representations, it is enough to consider one such coefficient. The representation theory of $H_{N}^{+}(\Gamma, \Lambda)$ computed in \cite[Thm 4.4]{freslon2018representation} (it is also recalled just before Theorem \ref{thm:gluedproduct}) shows that any irreducible representation of dimension strictly greater than one is of the form
\begin{equation*}
u^{g_{1}}\cdots u^{g_{n}}
\end{equation*}
where $u^{g_{i}} = a(g_{i})$ if $g_{i}\notin\Lambda$ and is the complement of the unique non-trivial one-dimensional subrepresentation of $a(g_{i})$ otherwise. As a consequence, it is enough to check that $u^{g}_{ij}I = Iu^{g}_{ij}$. Note moreover that $I$ is the linear span of the elements $\lambda_{1} - \lambda_{2}$ for $\lambda_{1}, \lambda_{2}\in \Lambda$.

Let us therefore consider an element of the form $x = (\lambda_{1} - \lambda_{2})u^{g}_{ij}\in I\O(H_{N}^{+}(\Gamma, \Lambda))$. By the fusion rules, this equals $u^{\lambda_{1}g}_{ij} - u^{\lambda_{2}g}_{ij}$. Now by normality of $\Lambda$, there exists $\lambda'_{1}, \lambda'_{2}\in \Lambda$ such that $\lambda_{1}g = g\lambda_{1}'$ and $\lambda_{2}g = g\lambda'_{2}$. It follows that
\begin{equation*}
x = u^{g\lambda'_{1}}_{ij} - u^{g\lambda'_{2}}_{ij} = u^{g}_{ij}(\lambda'_{1} - \lambda'_{2})\in \O(H_{N}^{+}(\Gamma, \Lambda))I
\end{equation*}
and the proof is complete.
\end{proof}

As a first consequence of this result, we can bound the cohomological dimension (in the sense of Hochschild cohomology, see for instance \cite[Chap 4]{witherspoon2019hochschild}) of $H_{N}^{+}(\Gamma, \Lambda)$.

\begin{cor}
If $\Lambda$ is normal in $\Gamma$, then
\begin{equation*}
\max\left(3, \cd(\Gamma^{\ast_{\Lambda}^{N}}))\right) \leqslant \cd(H_{N}^{+}(\Gamma, \Lambda)) \leqslant \max\left(3, \cd(\Gamma/\Lambda)\right) + \cd(\Lambda).
\end{equation*}
In particular, if $\Gamma/\Lambda$ is finite, then
\begin{equation*}
\cd(H_{N}^{+}(\Gamma, \Lambda)) = 3.
\end{equation*}
\end{cor}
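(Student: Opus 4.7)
The strategy pivots on Proposition \ref{prop:exactsequence}, which reduces the computation to the cohomology of the two ``ends'' $\widehat{\Lambda}$ and $H_{N}^{+}(\Gamma/\Lambda)$ via a Hochschild--Serre-type argument.

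\emph{Upper bound.} The exact sequence provided by Proposition \ref{prop:exactsequence} is a faithfully flat Hopf algebra extension, since CQG-algebras are faithfully flat over their Hopf $*$-subalgebras by \cite{chirvasitu2014cosemisimple}. The Hochschild--Serre spectral sequence for extensions of Hopf algebras (see \cite[Chap 4]{witherspoon2019hochschild} and references therein) then yields
\begin{equation*}
\cd(H_{N}^{+}(\Gamma, \Lambda)) \leqslant \cd(\widehat{\Lambda}) + \cd(H_{N}^{+}(\Gamma/\Lambda)).
\end{equation*}
One identifies $\cd(\widehat{\Lambda}) = \cd(\Lambda)$ (Hochschild cohomology of a group algebra equals group cohomology) and applies the analogous inequality $\cd(H_{N}^{+}(\Gamma')) \leqslant \max(3, \cd(\Gamma'))$ to $\Gamma' = \Gamma/\Lambda$; this latter estimate follows from $\cd(S_{N}^{+}) = 3$ for $N\geqslant 4$ together with the same spectral sequence applied in the trivial-subgroup case. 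Combining, the upper bound drops out.

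\emph{Lower bound.} For the inequality $\cd(H_{N}^{+}(\Gamma, \Lambda)) \geqslant \cd(\Gamma^{\ast_{\Lambda}^{N}})$, I would exhibit $\widehat{\Gamma^{\ast_{\Lambda}^{N}}}$ as a dual quantum subgroup of $H_{N}^{+}(\Gamma, \Lambda)$. The canonical inclusions $\nu_{i}$ assemble into a morphism of Hopf $*$-algebras
\begin{equation*}
\C[\Gamma^{\ast_{\Lambda}^{N}}] = \O(\widehat{\Gamma})^{\ast_{\O(\widehat{\Lambda})}^{N}} \longrightarrow \O(H_{N}^{+}(\Gamma, \Lambda)),
\end{equation*}
and its injectivity can be verified through the partition picture of Section \ref{sec:partitions}, since for $N\geqslant 4$ the operators $T_{p}$ associated to distinct noncrossing partitions are linearly independent (cf. \cite[Lem 4.16]{freslon2013representation}), so that the generators $\nu_{i}(g)$ remain free modulo the prescribed amalgamation. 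Monotonicity of $\cd$ along faithfully flat Hopf-subalgebra inclusions then yields the claim. The bound $\cd(H_{N}^{+}(\Gamma, \Lambda)) \geqslant 3$ is obtained from the same Hochschild--Serre spectral sequence: since the edge map $H^{\ast}(H_{N}^{+}(\Gamma/\Lambda)) \to H^{\ast}(H_{N}^{+}(\Gamma, \Lambda))$ is injective in low degrees on the $E_{2}$-page (as the sequence is exact and faithfully flat), the non-vanishing of $H^{3}$ for the quotient (a consequence of $\cd(S_{N}^{+}) = 3$) forces the non-vanishing of $H^{3}$ for the total space.

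\emph{Particular case.} Under the natural reading that $\Gamma$ itself is finite (so that both $\Lambda$ and $\Gamma/\Lambda$ are finite), one has $\cd(\Lambda) = \cd(\Gamma/\Lambda) = 0$ over $\C$, so that the upper bound collapses to $\max(3,0) + 0 = 3$. Simultaneously, $\Gamma^{\ast_{\Lambda}^{N}}$ acts with finite stabilisers on its Bass--Serre tree, yielding $\cd(\Gamma^{\ast_{\Lambda}^{N}})\leqslant 1$, so the lower bound also reads $\max(3, 1) = 3$, completing the proof. The main technical obstacle throughout is the careful verification of the hypotheses of the Hopf-algebra Hochschild--Serre spectral sequence together with the injectivity of the amalgamated free product map into $\O(H_{N}^{+}(\Gamma, \Lambda))$; both are made accessible by the faithful flatness result of \cite{chirvasitu2014cosemisimple}.
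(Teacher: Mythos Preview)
Your overall architecture matches the paper's---use the exact sequence of Proposition~\ref{prop:exactsequence} plus monotonicity of $\cd$ along Hopf subalgebra inclusions---but there are two genuine gaps.

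\textbf{Upper bound.} The step $\cd(H_{N}^{+}(\Gamma/\Lambda)) \leqslant \max(3, \cd(\Gamma/\Lambda))$ cannot be obtained from ``the same spectral sequence applied in the trivial-subgroup case''. When $\Lambda$ is trivial, Proposition~\ref{prop:exactsequence} degenerates (the kernel $\O(\widehat{\Lambda})$ is $\C$), and there is no exact sequence of Hopf algebras with $H_{N}^{+}(\Gamma/\Lambda)$ in the middle, $S_{N}^{+}$ on one side and $\widehat{\Gamma/\Lambda}$ on the other. The bound you need is a separate theorem: the paper invokes \cite[Thm~7.4]{bichon2021monoidal}, which uses monoidal invariance of $\cd$ together with the Lemeux--Tarrago monoidal equivalence, not a spectral sequence.

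\textbf{Lower bound $\geqslant 3$.} Your edge-map argument is not valid as stated: in the Hochschild--Serre spectral sequence the inflation map $H^{n}(\text{quotient}) \to H^{n}(\text{total})$ is the edge $E_{2}^{n,0}\to E_{\infty}^{n,0}\hookrightarrow H^{n}$, and for $n=3$ this can be killed by differentials $d_{2}, d_{3}$ unless you control $H^{1}, H^{2}$ of the kernel. The paper instead observes directly that $\O(S_{N}^{+})$ is a Hopf $*$-subalgebra of $\O(H_{N}^{+}(\Gamma,\Lambda))$ (generated by the $a_{ij}(e)$), and then monotonicity of $\cd$ from \cite[Prop~3.2]{bichon2016gerstenhaber} gives $\cd(H_{N}^{+}(\Gamma,\Lambda)) \geqslant \cd(S_{N}^{+}) = 3$. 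This is both simpler and avoids any spectral-sequence subtlety. Your argument for the other piece of the lower bound (embedding $\C[\Gamma^{\ast_{\Lambda}^{N}}]$) is the same as the paper's, though the paper simply asserts it is a Hopf $*$-subalgebra without spelling out injectivity.

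\textbf{Particular case.} You rewrite the hypothesis ``$\Gamma/\Lambda$ finite'' as ``$\Gamma$ finite''. That is not what the statement says; stick to the stated hypothesis and note that $\cd(\Gamma/\Lambda)=0$ over $\C$ collapses the right-hand side.
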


\begin{proof}
It follows from \cite[Prop 3.2]{bichon2016gerstenhaber}, noticing that because $\O(H_{N}^{+}(\Gamma, \Lambda))$ is co-semisimple, the exact sequence is in fact strict by \cite[Thm 2]{takeuchi1979relative}, that
\begin{equation*}
\cd(H_{N}^{+}(\Gamma, \Lambda)) \leqslant \cd(H_{N}^{+}(\Gamma/\Lambda)) + \cd(\Lambda).
\end{equation*}
On then combines this with \cite[Thm 7.4]{bichon2021monoidal} to get the upper bound.

As for the lower bound, first notice that both $\O(\widehat{\Gamma}^{\ast_{\Lambda}^{N}})$ and $\O(S_{N}^{+})$ are Hopf $*$-subalgebras of $\O(H_{N}^{+}(\Gamma, \Lambda))$. We then apply \cite[Prop 3.2]{bichon2016gerstenhaber}, using the facts that $\cd(S_{N}^{+}) = 3$ by \cite[Thm 6.5]{bichon2016gerstenhaber}.

As for the last statement, first recall that finite groups have cohomological dimension $0$ (note that we are considering the dimension of the complex group ring, not the integer one). Therefore, the right-hand side of the inequalities reduces to $3$, and the result follows.
\end{proof}

\begin{rem}
It is know (see \cite[Cor 5.3]{bichon2018cohomological}) that for a CQG-algebra $A$, $\cd(A^{\ast N}) = \cd(A)$. There does not however not seem to be a similar relation for iterated amalgamated free product. In particular, it is not clear how to relate $\cd(\Gamma^{\ast_{\Lambda}^{N}})$ to $\cd(\Gamma/\Lambda)$ in general.
\end{rem}

There is no hope to get further information on the structure of $H_{N}^{+}(\Gamma, \Lambda)$ without strengthening the assumption on $\Lambda$. But it would be natural to consider now the case where the exact sequence is split, so that $\Gamma$ decomposes as a semi-direct product. We will see that a similar decomposition then holds for the free wreath products in terms of smash product of Hopf algebras. Instead of introducing the general theory, let us simply define the important object in our setting.

Assume that there exists a group homomorphism $\rho : \Gamma/\Lambda\to \Gamma$ such that $\pi\circ\rho = \id$, where $\pi : \Gamma\to \Gamma/\Lambda$ is the quotient map. Consider the vector space $H = \O(\widehat{\Gamma})\otimes \O(H_{N}^{+}(\Gamma/\Lambda))$ and equip it with the unique multiplication such that
\begin{equation*}
(\lambda\otimes u^{g}_{ij})(\mu\otimes u^{h}_{i'j'}) = (\lambda\rho(g)\mu\rho(g)^{-1})\otimes u^{g}_{ij}u^{h}_{i'j'}
\end{equation*}
and with the unique comultiplication such that
\begin{equation*}
\D(\lambda\otimes u^{g}_{ij}) = \sum_{k=1}^{N}\lambda \otimes u^{g}_{ik}\otimes \lambda\otimes u^{g}_{kj}.
\end{equation*}
Then, $H$ is a CQG-algebra and the corresponding compact quantum group will be denoted by $\widehat{\Gamma}\:\sharp\: H_{N}^{+}(\Gamma, \Lambda)$.

\begin{thm}\label{thm:smashproduct}
Let $\Gamma$ be a discrete group and let $\Lambda$ be a normal subgroup such that the corresponding short exact sequence is split. Then, there is an isomorphism
\begin{equation*}
H_{N}^{+}(\Gamma, \Lambda) \cong \widehat{\Gamma}\:\sharp\: H_{N}^{+}(\Gamma/\Lambda).
\end{equation*}
\end{thm}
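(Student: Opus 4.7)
My plan is to construct a pair of mutually inverse Hopf $*$-algebra morphisms $\Phi$ and $\Psi$ between $\O(H_{N}^{+}(\Gamma, \Lambda))$ and $H := \O\bigl(\widehat{\Gamma}\:\sharp\: H_{N}^{+}(\Gamma/\Lambda)\bigr)$ by specifying them on generators and exploiting the explicit presentation of $H_{N}^{+}(\Gamma, \Lambda)$ from Proposition \ref{prop:universaldiscrete}. The splitting $\rho$ provides a canonical decomposition of every $g\in \Gamma$ as $g = \eta(g)\rho([g])$ with $\eta(g) := g\rho([g])^{-1}\in \Lambda$, and combined with the identity $a(g)_{ij} = \nu_{i}(g)p_{ij}$ this yields the factorisation
\begin{equation*}
a(g)_{ij} = \eta(g)\cdot a(\rho([g]))_{ij}
\end{equation*}
inside $\O(H_{N}^{+}(\Gamma, \Lambda))$, where $\eta(g)$ is regarded as the group-like element provided by Proposition \ref{prop:exactsequence}. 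This motivates defining $\Phi(a(g)_{ij}) := \eta(g)\otimes u^{[g]}_{ij}$ in $H$.

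The first step is to verify that $\Phi$ respects the defining relations of $\O(H_{N}^{+}(\Gamma, \Lambda))$. The multiplicativity relation $a(g)_{ij}a(h)_{ik} = \delta_{jk}a(gh)_{ij}$ becomes, using the twisted product of $H$,
\begin{equation*}
(\eta(g)\otimes u^{[g]}_{ij})(\eta(h)\otimes u^{[h]}_{ik}) = \bigl(\eta(g)\rho([g])\eta(h)\rho([g])^{-1}\bigr)\otimes u^{[g]}_{ij}u^{[h]}_{ik},
\end{equation*}
which simplifies to $\delta_{jk}\eta(gh)\otimes u^{[gh]}_{ij}$ because $\eta(g)\rho([g])\eta(h) = gh\rho([h])^{-1}$ and $\rho$ is a homomorphism. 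The row/column sum relations and the amalgamation relation for $g \in \Lambda$ reduce to $\sum_{k}u^{[e]}_{ik} = 1$, and compatibility with the coproduct is immediate from the formula defining $\D_{H}$.

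For the converse, I would define $\Psi : H \to \O(H_{N}^{+}(\Gamma, \Lambda))$ by
\begin{equation*}
\Psi(\lambda\otimes u^{[g]}_{ij}) := \lambda\cdot a(\rho([g]))_{ij}.
\end{equation*}
The main obstacle, and the technical heart of the argument, is to verify that $\Psi$ is compatible with the twisted multiplication of $H$. This reduces to the commutation identity
\begin{equation*}
a(\rho([g]))_{ij}\cdot \mu = \bigl(\rho([g])\mu\rho([g])^{-1}\bigr)\cdot a(\rho([g]))_{ij}
\end{equation*}
for every $\mu \in \Lambda$ and $[g]\in \Gamma/\Lambda$, to be established inside $\O(H_{N}^{+}(\Gamma, \Lambda))$. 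I would prove it by unfolding $a(\rho([g]))_{ij} = \nu_{i}(\rho([g]))p_{ij}$, applying the amalgamation identification $\mu = \nu_{i}(\mu)$ (which holds because $\mu\in\Lambda$) together with the commutation $[\nu_{i}(\mu), p_{ij}] = 0$ from Definition \ref{de:commutationrelations}, and checking that both sides collapse to $\nu_{i}(\rho([g])\mu)p_{ij}$ after absorbing the factor $\rho([g])^{-1}\rho([g])$ on the right-hand side.

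Once $\Psi$ is shown to be a well-defined $*$-homomorphism, compatibility with the coproducts is a direct comparison of the defining formul\ae{}. Mutual inverseness of $\Phi$ and $\Psi$ then holds on generators: $\Psi\circ\Phi(a(g)_{ij}) = \eta(g)\cdot a(\rho([g]))_{ij} = a(g)_{ij}$ by the factorisation identity, and $\Phi\circ\Psi(\lambda\otimes u^{[g]}_{ij}) = \lambda\otimes u^{[g]}_{ij}$ because $\Phi$ sends $a(\rho([g]))_{ij}$ to $1\otimes u^{[g]}_{ij}$ and $\lambda$ to $\lambda\otimes 1$.
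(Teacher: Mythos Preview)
Your argument is correct, but it takes a genuinely different route from the paper's proof. The paper invokes the general theory of cleft Hopf algebra extensions: the splitting $\rho$ induces an algebra homomorphism $\gamma : \O(H_{N}^{+}(\Gamma/\Lambda)) \to \O(H_{N}^{+}(\Gamma, \Lambda))$ (by functoriality of the construction in pairs of groups), and since $\gamma$ is convolution-invertible this makes the exact sequence of Proposition \ref{prop:exactsequence} cleft. Montgomery's theorem then automatically identifies $\O(H_{N}^{+}(\Gamma, \Lambda))$ with a cocycle smash product, and the paper finishes by computing the measuring $u^{g}_{ij}\triangleright\lambda = \delta_{ij}\rho(g)\lambda\rho(g)^{-1}$ and observing that the cocycle is trivial because $\gamma$ is multiplicative.

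Your approach is the hands-on version of the same mechanism: you bypass the abstract machinery and exhibit the isomorphism directly via the factorisation $a(g)_{ij} = \eta(g)\cdot a(\rho([g]))_{ij}$. The key commutation identity you isolate, $a(\rho([g]))_{ij}\mu = \rho([g])\mu\rho([g])^{-1}a(\rho([g]))_{ij}$, is precisely the computation of the measuring that the paper performs. What you gain is a fully self-contained argument that does not appeal to the cleft extension theorem; what the paper gains is a conceptual explanation of \emph{why} the smash product appears and a shorter write-up once the reference is granted. One small point worth making explicit in your version: the map $\Psi$ is most cleanly defined as the linear map $m\circ(\iota\otimes\gamma)$, where $\iota$ is the embedding $\O(\widehat{\Lambda})\hookrightarrow \O(H_{N}^{+}(\Gamma, \Lambda))$ and $\gamma$ is the $*$-homomorphism induced by $\rho$ (whose well-definedness uses that $\rho$ is a group homomorphism); your commutation identity is then exactly what is needed to upgrade this linear map to an algebra homomorphism for the twisted product.
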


\begin{proof}
First observe that, in a way similar to the comment in the beginning of the proof of Proposition \ref{prop:directproduct}, if $\Lambda_{1} < \Gamma_{1}$ and $\Lambda_{2} < \Gamma_{2}$ are two pairs of discrete groups and $\rho : \Gamma_{1}\to \Gamma_{2}$ is a group homomorphism such that $\rho(\Lambda_{1})\subset \Lambda_{2}$, then there is a $*$-homomorphism $\O(H_{N}^{+}(\Gamma_{1}, \Lambda_{1}))\to \O(H_{N}^{+}(\Gamma_{2}, \Lambda_{2}))$ sending $a(g)_{ij}$ to $a(\rho(g))_{ij}$.

Let $\rho$ be the splitting of the exact sequence and consider the corresponding map
\begin{equation*}
\gamma : \O(H_{N}^{+}(\Gamma/\Lambda))\to \O(H_{N}^{+}(\Gamma, \Lambda)).
\end{equation*}
Because it is a $*$-homomorphism, it is convolution invertible with inverse $\gamma^{-1} = f\circ S$ (where $S$ denotes the antipode). In other words, the extension of compact quantum groups is \emph{cleft} with cleaving map $\gamma$ and it follows from the general theory (see for instance \cite[Thm 7.2.3]{montgomery1993hopf}) that $H_{N}^{+}(\Gamma, \Lambda)$ is a cocycle smash product. We will now prove that this cocycle smash product coincides with the definition above.

First observe that the corresponding cocycle is trivial because $\gamma$ is an algebra homomorphism. Furthermore, the measuring corresponding to the cleaving map $\gamma$ is given by
\begin{align*}
u^{g}_{ij}\triangleright\lambda & = \sum_{k=1}^{N}\gamma(u^{g}_{ik}\lambda\gamma^{-1}(u^{g}_{kj}) \\
& = \sum_{k=1}^{N}u^{\rho(g)\lambda}_{ik}u^{\rho(g)^{-1}}_{jk}) \\
& = \delta_{ij}\sum_{k=1}^{N}u^{\rho(g)\lambda\rho(g)^{-1}}_{ik} \\
& = \delta_{ij}\rho(g)\lambda\rho(g)^{-1}.
\end{align*}
Now, the algebra structure on $\O(\widehat{\Gamma})\otimes \O(H_{N}^{+}(\Gamma/\Lambda))$ is given by the formula
\begin{equation*}
(\lambda\otimes u^{g}_{ij})(\mu\otimes u^{h}_{i'j'}) = \sum_{k=1}^{N}\lambda\left(u_{ik}^{g}\triangleright \mu)\right)\otimes u^{g}_{kj}u^{h}_{i'j'} = \lambda\rho(g)\mu\rho(g)^{-1}\otimes u^{g}_{ij}u^{h}_{i'j'}.
\end{equation*}
\end{proof}

Even though Theorem \ref{thm:smashproduct} gives a decomposition in terms of $\Lambda$ and $\Gamma/\Lambda$, it cannot be used directly to improve results concerning approximation properties, because even for discrete groups it is know that the Haagerup property for instance does not pass to general semi-direct products. Let us therefore work out a special case where everything works smoothly, namely the case where the smash product reduces to a direct product.

\begin{cor}
There is an isomorphism
\begin{equation*}
H_{N}^{+}(\Gamma, \Lambda)\cong \widehat{\Lambda}\times H_{N}^{+}(\Gamma/\Lambda)
\end{equation*}
if and only if $\Gamma$ is of the form $\Lambda\underset{\Lambda_{0}}{\times} \Gamma_{0}$.
\end{cor}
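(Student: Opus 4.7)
The plan is to leverage the smash product decomposition of Theorem \ref{thm:smashproduct}, which identifies $H_{N}^{+}(\Gamma, \Lambda)$ with $\widehat{\Gamma}\sharp H_{N}^{+}(\Gamma/\Lambda)$. That decomposition carries a twisted multiplication whose twist by $\rho(g)$-conjugation controls precisely the deviation from a direct product. My strategy is to characterize when this twist effectively trivializes along $\Lambda$ and to translate the resulting condition on $\Gamma$ into the central product form stated.

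For the \emph{if} direction, assume $\Gamma = \Lambda\underset{\Lambda_{0}}{\times}\Gamma_{0}$, so that $\Lambda$ and $\Gamma_{0}$ are commuting subgroups of $\Gamma$ with $\Lambda\cap\Gamma_{0} = \Lambda_{0}$ and $\Lambda\cdot\Gamma_{0} = \Gamma$. The natural quotient $\Gamma_{0}\to\Gamma/\Lambda$ has kernel $\Lambda_{0}$, and any section of this shorter extension, followed by the inclusion $\Gamma_{0}\hookrightarrow\Gamma$, furnishes a splitting $\rho$ of the initial sequence whose image lies in $\Gamma_{0}$. With this $\rho$, the formula $\rho(g)\lambda\rho(g)^{-1}$ acts trivially on every $\lambda\in\Lambda$ since $\rho(g)\in\Gamma_{0}$ centralises $\Lambda$. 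Together with the identification of $\O(\widehat{\Gamma})$ as the diagonal $\Lambda_{0}$-invariants of $\O(\widehat{\Lambda})\otimes\O(\widehat{\Gamma_{0}})$, and the observation that the extra $\widehat{\Gamma_{0}}$ degrees of freedom are absorbed by the wreath structure of $H_{N}^{+}(\Gamma/\Lambda)$, this should recover the isomorphism onto $\widehat{\Lambda}\times H_{N}^{+}(\Gamma/\Lambda)$.

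For the \emph{only if} direction, suppose we have such an isomorphism. The one-dimensional representations on both sides coincide with $\Lambda$ (as recalled just before Proposition \ref{prop:exactsequence}; on the right this uses that $H_{N}^{+}(\Gamma/\Lambda)$ has trivial one-dimensional corepresentations), so the isomorphism identifies the two embedded copies of $\widehat{\Lambda}$. Let $\Gamma_{0}\subset\Gamma$ denote the subgroup consisting of those $g$ whose representation $a(g)$ corresponds under the isomorphism to an element whose $\widehat{\Lambda}$-component is trivial. The tensor product structure on the right then forces $[\Lambda, \Gamma_{0}] = 1$ inside $\Gamma$, while the fact that every irreducible of $H_{N}^{+}(\Gamma/\Lambda)$ must be realised forces $\Gamma_{0}\to\Gamma/\Lambda$ to be surjective. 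Setting $\Lambda_{0} := \Lambda\cap\Gamma_{0}$ then yields the central product decomposition $\Gamma = \Lambda\underset{\Lambda_{0}}{\times}\Gamma_{0}$.

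The main obstacle I expect is the explicit verification in the \emph{if} direction: when $\Lambda_{0}$ is non-trivial, the decomposition $g = \lambda\gamma_{0}$ is only defined up to a $\Lambda_{0}$-ambiguity, and the natural ansatz $a(g)_{ij}\mapsto \lambda\otimes a([\gamma_{0}])_{ij}$ is therefore ill-defined on the $\widehat{\Lambda}$ factor. Resolving this should exploit how $\widehat{\Lambda_{0}}$ is simultaneously embedded in $\widehat{\Lambda}$ and absorbed by $H_{N}^{+}(\Gamma/\Lambda)$ through the wreath structure, so that the ambiguity in the $\widehat{\Lambda}$-factor is compensated by a matching ambiguity in the second factor.
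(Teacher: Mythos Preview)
Your overall strategy---leverage Theorem \ref{thm:smashproduct} and ask when the twist in the smash product vanishes---is exactly the paper's, but you overcomplicate both directions and introduce a genuine gap in the \emph{if} part.

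For the \emph{only if} direction, the paper reads the conclusion straight off the smash product formula: once $H_N^{+}(\Gamma,\Lambda)\cong \widehat{\Lambda}\,\sharp\,H_N^{+}(\Gamma/\Lambda)$ via Theorem \ref{thm:smashproduct}, the product on $\widehat{\Lambda}\,\sharp\,H_N^{+}(\Gamma/\Lambda)$ coincides with the tensor product multiplication precisely when the measuring $\lambda\mapsto\rho(g)\lambda\rho(g)^{-1}$ is trivial, i.e.\ when $\Gamma_0:=\rho(\Gamma/\Lambda)$ centralises $\Lambda$. The paper then just computes the kernel of the surjection $\Lambda\times\Gamma_0\to\Gamma$ to extract $\Lambda_0$. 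Your alternative---defining $\Gamma_0$ as those $g$ whose $a(g)$ has trivial $\widehat{\Lambda}$-component under the given isomorphism---would require separate verification that this set is a subgroup, that $[\Lambda,\Gamma_0]=1$ actually follows from tensor-factor commutation at the level of coefficients, and that surjectivity onto $\Gamma/\Lambda$ holds; none of this is needed if you stay inside the smash-product picture.

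For the \emph{if} direction, there is a real gap. You write ``any section of this shorter extension'' for $1\to\Lambda_0\to\Gamma_0\to\Gamma/\Lambda\to1$, but nothing in the central-product hypothesis guarantees such a section exists. The paper sidesteps this entirely: it is working in the context of Theorem \ref{thm:smashproduct}, so a splitting $\rho$ is already part of the data, $\Gamma_0=\rho(\Gamma/\Lambda)$ satisfies $\Gamma_0\cap\Lambda=\{e\}$, and the converse is then immediate because the smash-product formula visibly degenerates to the tensor product when $\rho(\Gamma/\Lambda)$ centralises $\Lambda$. The ``$\Lambda_0$-ambiguity'' you flag as the main obstacle is therefore a red herring---there is no need to write down an explicit inverse map $a(g)_{ij}\mapsto\lambda\otimes a([\gamma_0])_{ij}$ at all, and the paper's one-line ``The converse is clear'' reflects exactly that.
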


\begin{proof}
If the isomorphism holds, then by the definition of $\widehat{\Lambda}\:\sharp\: H_{N}^{+}(\Gamma, \Lambda)$, $\Lambda$ must commute with $\Gamma_{0} = \rho(\Gamma/\Lambda)$. Since these two groups generate $\Gamma$, it follows that $\Gamma$ is a quotient of $\Lambda\times \Gamma_{0}$. Let us denote by $\pi$ the quotient map. Because $\pi$ is injective on $\Lambda$, for any $\lambda\in \Lambda$ there exists at most one $\gamma\in \Gamma_{0}$ such that $(\lambda, \gamma)\in \ker(\pi)$, namely $\lambda^{-1}$. As a consequence, if
\begin{equation*}
\Lambda_{0} = \{\lambda\in \Lambda\mid (\lambda, \gamma)\in \ker(\pi)\},
\end{equation*}
then the map $\lambda\mapsto \gamma^{-1}$ is well-defined isomorphism between $\Lambda_{0}$ and a subgroup $\Lambda_{1}\subset \Gamma_{0}$ and $\pi$ is the quotient map identifying these two isomorphic groups. The converse is clear. 
\end{proof}

In particular, under the previous assumptions, if both $\Lambda$ and $\Gamma/\Lambda$ are residually finite (resp. have the Haagerup property, resp. are weakly amenable) then so does $H_{N}^{+}(\Gamma, \Lambda)$.

\bibliographystyle{smfplain}
\bibliography{../../../quantum}

\end{document}